\DeclareMathAlphabet{\mathbbb}{U}{bbold}{m}{n}
\theoremstyle{definition}
\newtheorem{defi}{Definition}[section]
\theoremstyle{plain}
\newtheorem{theo}[defi]{Theorem}
\newtheorem{lemma}[defi]{Lemma}
\newtheorem{prop-defi}[defi]{Proposition-Definition}
\newtheorem*{fact}{Fact}
\theoremstyle{remark}
\newtheorem*{rmk}{Remark}
\title{Fields of Fractions in Rigid Geometry}
\author{Jiahong Yu\thanks{Academy of Mathematics and Systems Science, Chinese Academy of Sciences} \thanks{Morningside Center of Mathematics, CAS}}
\def\calE{\mathcal{E}}
\def\calO{{\mathcal{O}}}
\begin{document}

\maketitle

\begin{abstract}
Let $A$ be an affinoid integral domain over a non-Archimedean field $K$, and let $L$ be its field of fractions. We prove that the normalization of $A$ can be reconstructed from $L$ by taking the intersection of all maximal discrete valuation subrings. As a corollary, taking the field of fractions induces a fully faithful functor from the category of normal affinoid integral domains over $K$ to the category of field extensions of $K$. This provides another $p$-adic analogue of the Riemann Hebbarkeitssatz.
\end{abstract}

\section{Introduction}

In the theory of algebraic geometry, the field of fractions $F$ of an integral domain $A$ serves as the fundamental object of birational geometry, capturing the properties that are invariant under birational equivalences. By passing to $F$, one can classify varieties up to their `generic' behavior, abstracting away the complexities of specific singularities. However, this abstraction comes with a significant loss of information: the field $F$ itself is `model-blind.' It cannot reconstruct the integral domain $A$ from its field of fractions $F$.

In this paper, we show that in the non-Archimedean setting, the field of fractions carries enough information to reconstruct the analytic structure on its own up to a finite cover. Explicitly, let $K$ be a complete non-Archimedean field, we will prove the following theorem:

\begin{theo}[{Theorem \ref{theo:reconst2}}]\label{theo:intersection of discrete val}
    Let $A$ be an affinoid $K$-algebra whose underlying ring is an integral domain. Let $\mathcal{E}$ be the field of fractions of $A$ and $B \subseteq \mathcal{E}$ be the normalization of $A$. Denote by $\mathbb{X}(\mathcal{E})$ the set of all discrete valuations of $\mathcal{E}$. Then
    \[
    \bigcap_{v \in \mathbb{X}(\calE)} O_v = \left\{
    \begin{matrix}
    B^\circ & \text{if $K$ is discretely valued} \\
    B & \text{if $K$ is not discretely valued}
    \end{matrix}
    \right.
    \]
    where $O_v$ is the valuation ring of $v$, and $B^\circ$ is the subring of power-bounded elements of $B$.
\end{theo}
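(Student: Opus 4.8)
The plan is to reduce immediately to the normal case and then treat the two inclusions separately, the main tool being that a normal Noetherian domain is the intersection of its localizations at height-one primes (Krull/Serre). First I would replace $A$ by its normalization $B$: since affinoid algebras are excellent and Japanese, $B$ is a finite $A$-module, is again a normal affinoid domain, and $\mathrm{Frac}(B) = \mathcal{E}$. Hence both target rings $B$ and $B^\circ$ are intrinsic to $\mathcal{E}$ together with its $K$-structure, and I may argue with $B$ throughout.

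For the inclusion ``$\subseteq$'' I would exhibit enough members of $\mathbb{X}(\mathcal{E})$. Since $B$ is a normal Noetherian domain, $B = \bigcap_{\mathrm{ht}\,\mathfrak{p}=1} B_\mathfrak{p}$, and each $B_\mathfrak{p}$ is a DVR defining a discrete valuation of $\mathcal{E}$ (the \emph{geometric} valuations, trivial on $K$); this already gives $\bigcap_v O_v \subseteq B$ in both cases. When $K$ is discretely valued I would sharpen this using the integral model: $B^\circ$ is a flat, topologically finite type, \emph{normal} $\mathcal{O}_K$-algebra with $B^\circ[1/\pi_K] = B$ and $\mathrm{Frac}(B^\circ) = \mathcal{E}$ (normality of $B^\circ$ follows from that of $B$ and the usual estimate on the coefficients of an integral dependence relation). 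Applying Krull to $B^\circ$ gives $B^\circ = \bigcap_{\mathrm{ht}\,\mathfrak{q}=1}(B^\circ)_\mathfrak{q}$, and the height-one primes $\mathfrak{q}$ split into those avoiding $\pi_K$ (recovering the geometric valuations) and those containing $\pi_K$ (the \emph{arithmetic} valuations, nontrivial on $K$). All of these lie in $\mathbb{X}(\mathcal{E})$, so $\bigcap_v O_v \subseteq B^\circ$.

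For the reverse inclusion I must control \emph{every} discrete valuation, which is the crux. The key point is that each $v \in \mathbb{X}(\mathcal{E})$ is bounded on the bounded subring $B^\circ$: if some $g \in B^\circ$ had $v(g) < 0$ then $|g^n|_v \to \infty$, contradicting that $v$ is compatible with the affinoid structure; hence $B^\circ \subseteq O_v$ for every $v$, giving $\bigcap_v O_v \supseteq B^\circ$ and settling the discretely valued case. When $K$ is not discretely valued I would first observe that no $v \in \mathbb{X}(\mathcal{E})$ can be nontrivial on $K$: the restriction $v|_K$ would be a discrete valuation compatible with the topology of the complete, densely valued field $K$, which is impossible, so $v|_K$ is trivial. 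Since $|K^\times|$ is dense, any $f \in B$ can then be written $f = \lambda g$ with $\lambda \in K^\times$ and $g = f/\lambda \in B^\circ$ (choosing $|\lambda| \ge \|f\|_{\sup}$), whence $v(f) = v(g) \ge 0$. This yields $\bigcap_v O_v \supseteq B$, which together with the geometric upper bound gives equality $B$.

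The main obstacle, and the step I would spend the most care on, is precisely the passage from the divisorial valuations (which only yield containment in $B$) to all of $\mathbb{X}(\mathcal{E})$: namely, verifying that $B^\circ$ is a genuinely normal Noetherian model so that Krull applies to it, and that the boundedness property defining $\mathbb{X}(\mathcal{E})$ really forces $B^\circ \subseteq O_v$ for an \emph{arbitrary} discrete $v$. This is exactly where the dichotomy between discretely and non-discretely valued $K$ enters, through the (non)existence of discrete valuations that are nontrivial on $K$.
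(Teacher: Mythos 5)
Your reduction to the normal case and your treatment of the inclusion $\bigcap_{v} O_v \subseteq B$ (and $\subseteq B^\circ$ when $K$ is discretely valued) are sound and essentially the same as the paper's: apply the Krull/Hartogs decomposition of a normal Noetherian domain to $B$, and, in the discrete case, to the model $B^\circ$, which is Noetherian and normal with fraction field $\mathcal{E}$. That part is the easy half.

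The reverse inclusion, which is the actual content of the theorem, has a genuine gap. By the paper's conventions, $\mathbb{X}(\mathcal{E})$ is the set of \emph{all} discrete valuations of the abstract field $\mathcal{E}$, i.e.\ all DVR subrings of $\mathcal{E}$ with fraction field $\mathcal{E}$; there is no continuity or compatibility requirement. Your key step asserts that if $g \in B^\circ$ had $v(g) < 0$ then ``$|g^n|_v \to \infty$, contradicting that $v$ is compatible with the affinoid structure'' --- but no such compatibility is assumed, so there is nothing to contradict: an arbitrary DVR inside $\mathcal{E}$ need not see the Banach norm at all. (Your closing reference to ``the boundedness property defining $\mathbb{X}(\mathcal{E})$'' confirms the misreading; $\mathbb{X}(\mathcal{E})$ is not defined by any boundedness property.) The same problem undermines your non-discrete case, where you dismiss a nontrivial restriction $v|_K$ as ``a discrete valuation compatible with the topology of $K$.'' Closing this gap is exactly the role of the paper's Lemma \ref{lemma: image of complete ring to dis value field}: since $(B^\circ, \mathfrak{m}_K B^\circ)$ is a Henselian pair, each element $1+s$ with $s \in \mathfrak{m}_K B^\circ$ has an $n$-th root in $B^\circ$ for infinitely many $n$, which forces $v(1+s) = 0$ for \emph{any} discrete valuation $v$ (a purely algebraic constraint, no topology invoked); then, fixing $0 \neq t \in \mathfrak{m}_K$, the relations $t s^m \in \mathfrak{m}_K B^\circ \subseteq O_v$ for all $m \geq 1$ give $v(t) + m\,v(s) \geq 0$, hence $v(s) \geq 0$ for every $s \in B^\circ$. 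This yields $B^\circ \subseteq O_v$ for all $v \in \mathbb{X}(\mathcal{E})$ with no topological hypothesis; in the non-discrete case it also gives $\mathcal{O}_K \subseteq O_v \cap K$, and since the only rings between $\mathcal{O}_K$ and $K$ are these two, while $O_v \cap K$ must be a DVR or $K$ and $\mathcal{O}_K$ is not a DVR, one gets $K \subseteq O_v$ and hence $B = K \cdot B^\circ \subseteq O_v$. Without an argument of this kind, your proof establishes only the easy inclusion.
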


Obviously, the construction in Theorem \ref{theo:intersection of discrete val} is functoral. Hence, as a corollary, we have the following:

\begin{theo}[{Theorem \ref{theo: proved ration implies algebra}}]\label{theo:ration implies algebraic}
    Let $A$, $B$ be affinoid integral domains over $K$, with their fields of fractions denoted by $\mathcal{E}$ and $\mathcal{F}$ respectively. If $B$ is normal, then any $K$-algebra homomorphism $f: \mathcal{E} \to \mathcal{F}$ satisfies $f(A) \subseteq B$.
\end{theo}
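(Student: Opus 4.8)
The plan is to read Theorem~\ref{theo:intersection of discrete val} as a \emph{functorial} reconstruction and to feed the field embedding $f$ into it. For a field extension $L/K$ write $R_L := \bigcap_{v \in \mathbb{X}(L)} O_v$ for the intersection of all discrete valuation rings of $L$. Applying Theorem~\ref{theo:intersection of discrete val} to $A$ and to $B$ separately, one has $R_{\calE} = B_A$ and $R_{\calF} = B$ when $K$ is not discretely valued, and $R_{\calE} = B_A^\circ$, $R_{\calF} = B^\circ$ when $K$ is discretely valued; here $B_A \subseteq \calE$ denotes the normalization of $A$, while $B$ is its own normalization because it is assumed normal. The first reduction is therefore to prove the purely valuation-theoretic containment $f(R_{\calE}) \subseteq R_{\calF}$ for every $K$-algebra homomorphism $f \colon \calE \to \calF$.

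For that, I would show that $f$ pulls discrete valuations back to discrete (or trivial) ones. Since a homomorphism of fields is injective, for each $w \in \mathbb{X}(\calF)$ the composite $v := w \circ f$ is a valuation of $\calE$ whose value group is a subgroup of $w(\calF^\times) \cong \mathbb{Z}$; hence $v$ is either trivial or a rank-one discrete valuation, i.e.\ $v \in \mathbb{X}(\calE)$. In the trivial case $w(f(x)) = v(x) = 0$ for every nonzero $x$, and in the discrete case $x \in R_{\calE} \subseteq O_v$ forces $w(f(x)) = v(x) \ge 0$; either way $f(x) \in O_w$. Intersecting over all $w$ yields $f(R_{\calE}) \subseteq \bigcap_{w} O_w = R_{\calF}$, which is the functoriality at the level of reconstructed rings.

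It then remains to descend from the intersection rings back to $A$ and $B$. When $K$ is not discretely valued this is immediate: $A \subseteq B_A = R_{\calE}$, so $f(A) \subseteq f(R_{\calE}) \subseteq R_{\calF} = B$, and we are done. When $K$ is discretely valued the intersection only recovers the power-bounded subrings, and I expect the main obstacle to sit here. The device is that an affinoid domain over a discretely valued field satisfies $A = A^\circ[1/\pi]$ for a uniformizer $\pi$ of $K$, and likewise $B = B^\circ[1/\pi]$; moreover the inclusion $A \hookrightarrow B_A$ is bounded (the normalization is module-finite), so power-bounded elements stay power-bounded and $A^\circ \subseteq B_A^\circ = R_{\calE}$. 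Combining these gives $f(A^\circ) \subseteq f(R_{\calE}) \subseteq R_{\calF} = B^\circ$, and since $f$ fixes $K$ pointwise we have $f(\pi) = \pi$, whence
\[
f(A) = f(A^\circ)[1/\pi] \subseteq B^\circ[1/\pi] = B.
\]
The crux of the whole argument is thus the valuation-pullback step together with this final $\pi$-inversion: it is exactly what upgrades the discretely valued case, where Theorem~\ref{theo:intersection of discrete val} only sees $A^\circ$ rather than $A$, to the sharp containment $f(A) \subseteq B$ rather than the weaker $f(A) \subseteq \pi^{-n}B$.
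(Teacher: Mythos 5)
Your proof is correct, but it takes a genuinely different route from the paper's. The paper (Theorem \ref{theo: proved ration implies algebra}) never looks at the valuations of $\calE$ at all: it applies the Henselian-pair Lemma \ref{lemma: image of complete ring to dis value field} directly to the composite $A^\circ \hookrightarrow \calE \xrightarrow{f} \calF$ to get $f(A^\circ) \subseteq \calO_w$ for every $w \in \mathbb{X}(\calF)$, and then needs Theorem \ref{theo:reconst2} only on the target side, where both possible answers $B$ and $B^\circ$ lie inside $B$. You instead invoke the reconstruction theorem on both sides and supply the functoriality yourself, by restricting each $w \in \mathbb{X}(\calF)$ along the field embedding and noting that the restriction is either trivial or, after renormalizing its value group $m\mathbb{Z}$, again discrete with the same valuation ring --- a purely valuation-theoretic step absent from the paper. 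Each approach buys something. The paper's yields the stronger Theorem \ref{theo: proved ration implies algebra}, where $A$ need only be an $\calO_K$-algebra with $(A,\mathfrak{m}_K A)$ a Henselian pair; your route cannot reach that generality because you apply Theorem \ref{theo:intersection of discrete val} to $\mathrm{Frac}(A)$ and therefore need $A$ affinoid. Yours, on the other hand, makes explicit the repair needed in the discretely valued case, where the intersection only recovers $B_A^\circ$ and $B^\circ$: the chain $A=A^\circ[1/\pi]$, $f(\pi)=\pi$, $B^\circ[1/\pi]=B$ is exactly the step the paper leaves implicit when deducing the statement above from Theorem \ref{theo: proved ration implies algebra}. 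Two cosmetic remarks: the pulled-back valuation is in $\mathbb{X}(\calE)$ only after normalization (harmless, since $O_v$ is unchanged), and the inclusion $A^\circ \subseteq B_A^\circ$ is most cleanly justified by continuity of $A \hookrightarrow B_A$ (Theorem \ref{theo: canonical topology on affd}) rather than by module-finiteness alone.
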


We explain the picture of Theorem \ref{theo:ration implies algebraic} in the context of rigid geometry. In rigid geometry, we regard a homomorphism between two affinoid algebras as a morphism between their corresponding rigid spectra (denoted by $\mathrm{Sp}(-)$ in what follows). Therefore, under the notation of the above theorem, the $k$-algebra homomorphism $f:\mathcal{E}\to \mathcal{F}$ can be regarded as a `meromorphic mapping' from the spectrum $\mathrm{Sp}(B)$ to $\mathrm{Sp}(A)$. Meanwhile, the spectrum of an affinoid algebra is usually viewed as a bounded analytic domain (this point differs from algebraic geometry). Thus, Theorem 1.2 can be seen as a $p$-adic analogue of the following fact in the complex analytic geometry.

\begin{fact}[Riemann Hebbarkeitssatz]
    Suppose $X$ is a connected smooth complex analytic space, $Y \subseteq X$ is an analytic closed subspace of $X$, then any bounded complex analytic function on $U = X - Y$ can be analytically continued to $X$.
\end{fact}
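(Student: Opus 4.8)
The plan is to prove this classical removable-singularity statement by localizing and then reducing to the one-variable Riemann extension theorem via slicing. First I would observe that the problem is local and that a solution, if it exists, is unique. Since $X$ is connected and smooth and $Y$ is a proper analytic subset, $U = X - Y$ is connected and dense (a proper analytic subset has real codimension at least $2$), so by the identity theorem any two analytic extensions of a given $f$ coincide, and local extensions built near individual points of $Y$ automatically glue to a single global one. Thus it suffices to extend $f$ across $Y$ in a small neighborhood of an arbitrary point $p \in Y$, where, using smoothness, I may take $X$ to be a polydisc $\Delta^n \subseteq \mathbb{C}^n$ centered at $p$ and assume $Y$ reduced, since only its support matters for function extension.

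Next I would reduce to the case where $Y$ is a hypersurface. As $Y$ is a proper analytic subset, near $p$ it is cut out by finitely many holomorphic functions not all identically zero, so I may choose a single $g \not\equiv 0$ with $g(p)=0$ and $Y \subseteq H := \{g = 0\}$. On $X - H \subseteq U$ the function $f$ is holomorphic and bounded; if I extend $f|_{X-H}$ across $H$, the extension agrees with the original $f$ on the dense open set $X - H \subseteq X - Y$ and hence, by continuity, furnishes the desired extension across all of $Y$. So I may replace $Y$ by the hypersurface $H$. After a generic linear change of coordinates making $g$ regular in $z_n$, Weierstrass preparation lets me further assume $g(z', z_n)$ is a Weierstrass polynomial of degree $d$ in $z_n$ with coefficients vanishing at the origin, so that for each fixed $z' = (z_1, \dots, z_{n-1})$ the slice meets $H$ in the at most $d$ roots of $g(z', -)$, all of which tend to $0$ as $z' \to 0$.

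For the core step I would fix a radius $r$ and shrink the $z'$-polydisc so that, for every $z'$ in it, all roots of $g(z', -)$ lie in $\{|z_n| < r\}$ while the circle $\{|z_n| = r\}$ stays inside $\Delta$; this is possible precisely because the roots converge to $0$. On this circle $f$ is holomorphic (it misses $H$), so I can define
\[
F(z', z_n) = \frac{1}{2\pi i}\oint_{|\zeta| = r} \frac{f(z', \zeta)}{\zeta - z_n}\, d\zeta, \qquad |z_n| < r,
\]
which is jointly holomorphic in $(z', z_n)$ by differentiation under the integral sign. For each fixed $z'$ the function $\zeta \mapsto f(z', \zeta)$ is holomorphic and bounded on $\{|\zeta| < r\}$ away from its finitely many slice singularities, so the one-variable Riemann extension theorem removes them and the Cauchy formula shows that $F(z', -)$ coincides with that one-variable extension; in particular $F = f$ wherever $f$ was originally defined. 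Hence $F$ is the sought holomorphic extension on the polydisc, which completes the local case and, by the gluing observed above, the theorem.

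The main obstacle is the passage from a slicewise extension to a \emph{jointly} holomorphic one: knowing that $f$ extends along each $z_n$-line is not by itself enough, and the essential device is the Cauchy integral over a contour that avoids $Y$ uniformly in $z'$. Making such a contour available is exactly what forces the preliminary reductions --- replacing $Y$ by a hypersurface and normalizing it by Weierstrass preparation so its intersection with each slice is finite and collapses to the origin. The boundedness hypothesis enters only through the one-variable theorem, to kill the slice singularities, and it is indispensable, as the function $1/z_n$ on $\Delta - \{z_n = 0\}$ already shows.
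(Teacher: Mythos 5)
The paper never proves this statement: it is quoted as a classical \emph{Fact} from complex analytic geometry, serving only as motivation for the $p$-adic analogues (Theorems \ref{theo:intersection of discrete val} and \ref{theo:ration implies algebraic}) that the paper actually establishes. So there is no in-paper argument to compare yours against; your proposal has to be judged on its own, and on its own it is correct. It is the standard textbook proof of the second Riemann extension theorem: localize and use uniqueness of extensions to glue; replace $Y$ by a hypersurface $H = \{g = 0\}$ containing it; normalize $g$ by Weierstrass preparation so each $z_n$-slice meets $H$ in at most $d$ points collapsing to the origin; and then produce the joint extension by the Cauchy integral over a circle $\{|z_n| = r\}$ that avoids $H$ uniformly in $z'$, with the one-variable removable singularity theorem (where boundedness enters) identifying the integral with $f$ off $H$. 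The two small points you leave implicit are harmless but worth flagging: the statement tacitly requires $Y \neq X$, and the connectedness of the complement of a proper analytic subset (which you justify by the real-codimension-$2$ heuristic) is itself a standard fact that deserves a citation rather than a parenthesis. Note also that your argument uses smoothness of $X$ essentially (local coordinates for the Weierstrass argument), which is exactly why the paper's Fact is stated for smooth $X$, whereas the paper's own $p$-adic results need no smoothness --- normality of the affinoid algebra plays the analogous role there.
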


In fact, another $p$-adic analogue of the above fact has been constructed in \cite{1976}. Here we give a more `algebraic' analogue. However, we do not know the relationship between these two analogues at the moment.

Finally, we introduce some applications of Theorem \ref{theo:intersection of discrete val}. From now on, for an abstract ring $A$ and a prime ideal $\mathfrak{p} \subseteq A$, denote by $\kappa(\mathfrak{p})$ the residue field (i.e., the field of fractions of the integral domain $A/\mathfrak{p}$). The first application is a criterion for finiteness of homomorphisms between affinoid algebras:

\begin{theo}[Theorem \ref{cor: max affinoid of finitely generated field proof}]\label{theo: finite critier}
    Let $f: A \to B$ be a homomorphism of affinoid $K$-algebras. Then the following are equivalent:
    \begin{enumerate}
        \item $f$ is finite;
        \item For every minimal prime ideal $\mathfrak{p} \subseteq B$ of $B$, the field extension $\kappa(f^{-1}\big(\mathfrak{p})\big) \subseteq \kappa(\mathfrak{p})$ is a finitely generated field extension.
    \end{enumerate}
\end{theo}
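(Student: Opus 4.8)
The plan is to prove the two implications separately, with $(2)\Rightarrow(1)$ requiring the bulk of the work. For $(1)\Rightarrow(2)$, suppose $f$ is finite and let $\mathfrak{p}\subseteq B$ be a minimal prime with $\mathfrak{q}=f^{-1}(\mathfrak{p})$. Then $A/\mathfrak{q}\to B/\mathfrak{p}$ is an injective, module-finite homomorphism of affinoid domains, so passing to fraction fields makes $\kappa(\mathfrak{p})$ a finite (in particular finitely generated) field extension of $\kappa(\mathfrak{q})$.

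For $(2)\Rightarrow(1)$, I would first carry out a sequence of standard dévissages to reduce to the essential case. Writing $N=\mathrm{nil}(B)$, a filtration by powers of $N$ shows $f$ is finite iff $A\to B_{\mathrm{red}}$ is finite, and condition (2) is insensitive to replacing $B$ by $B_{\mathrm{red}}$ since minimal primes, their contractions, and the fields $\kappa(\mathfrak{p})$ are all unchanged. With $B$ reduced, $B\hookrightarrow\prod_i B/\mathfrak{p}_i$ over its finitely many minimal primes, and a submodule of a finite module over the Noetherian ring $A$ is finite; thus it suffices to treat each $A/f^{-1}(\mathfrak{p}_i)\to B/\mathfrak{p}_i$ separately. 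Replacing notation, I am reduced to the core statement: \emph{an injective homomorphism $A\hookrightarrow B$ of affinoid domains, with $\mathcal{F}=\mathrm{Frac}(B)$ finitely generated over $\mathcal{E}=\mathrm{Frac}(A)$, is finite.} Replacing $B$ by its normalization (finite over $B$, since affinoid domains are Japanese, and with the same fraction field) I may also assume $B$ is normal.

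The heart of the argument is to upgrade ``finitely generated'' to ``finite (algebraic)'' for the extension $\mathcal{F}/\mathcal{E}$; this is the \textbf{main obstacle}, and it is exactly the place where the non-Archimedean analytic structure is indispensable (in algebraic geometry a dominant map with positive-dimensional fibers already has a finitely generated function-field extension). I would argue by contradiction: if some $t\in B$ is transcendental over $\mathcal{E}$, rescale so that $|t|_{\sup}\le 1$ and consider the continuous map from the relative Tate algebra $A\langle X\rangle\to B$, $X\mapsto t$. This map is injective: a nonzero element $F=\sum_i a_iX^i$ of its kernel would satisfy $F(t)=0$, and Weierstrass preparation (applied after a coordinate change making $F$ regular) writes $F=u\cdot P$ with $u\in A\langle X\rangle^{\times}$ and $P$ a monic polynomial over $A$, whence $P(t)=0$ contradicts the transcendence of $t$ over $\mathcal{E}\supseteq A$. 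Thus $\mathrm{Frac}(A\langle X\rangle)$ sits between $\mathcal{E}$ and $\mathcal{F}$. Since every intermediate field of a finitely generated field extension is again finitely generated, the proof reduces to the key analytic input that $\mathrm{Frac}(A\langle X\rangle)/\mathcal{E}$ is \emph{not} finitely generated --- morally, adjoining one analytic disk adds infinitely many algebraically independent ``power-series directions.'' The delicate points are the relative Weierstrass preparation over the base domain $A$ (which I expect to handle by localizing at a suitable point of $\mathrm{Sp}(A)$, where classical Weierstrass theory over a complete residue field applies) and a clean proof of the non-finite-generation of the relative disk's fraction field.

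Once $\mathcal{F}/\mathcal{E}$ is known to be finite, I conclude using the reconstruction Theorem \ref{theo:intersection of discrete val}. Let $B'$ be the integral closure of $A$ in $\mathcal{F}$; because $A$ is Japanese and $\mathcal{F}/\mathcal{E}$ is finite, $B'$ is a finite $A$-module, hence a normal affinoid domain with $\mathrm{Frac}(B')=\mathcal{F}$. Now both $B$ and $B'$ are normal affinoid domains with the same fraction field $\mathcal{F}$, so Theorem \ref{theo:intersection of discrete val} recovers each of them from $\mathcal{F}$ as $\bigcap_{v\in\mathbb{X}(\mathcal{F})}O_v$ (in the non-discretely-valued case) or forces $B^\circ=(B')^\circ$ (in the discretely-valued case); in the latter case $B=B^\circ[1/\pi]=(B')^\circ[1/\pi]=B'$, since every element of an affinoid algebra is bounded and hence lies in $B^\circ[1/\pi]$. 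Either way $B=B'$ is finite over $A$, completing the reduction and the proof.
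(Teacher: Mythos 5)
Your reductions and your endgame are correct. The implication $(1)\Rightarrow(2)$, the d\'evissage to an injection $A\hookrightarrow B$ of affinoid domains with $B$ normal (this is exactly the paper's Lemma \ref{lemma: finite reduce to domain} plus normalization via the Nagata property), and the final step are all fine. In fact your finish is a genuine, slightly cleaner variant of the paper's: once $\mathcal{F}/\mathcal{E}$ is known to be \emph{finite}, the integral closure $B'$ of $A$ in $\mathcal{F}$ is finite over $A$ by Theorem \ref{theo:nagata affd}, and since $B$ and $B'$ are then two normal affinoid domains with the \emph{same} fraction field, Theorem \ref{theo:reconst2} identifies them immediately. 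The paper cannot argue this way because it never separately establishes algebraicity; it must compare intersections of valuation rings taken over two \emph{different} fields ($\mathcal{K}$, the algebraic closure of $\mathcal{E}$ in $\mathrm{Frac}(B)$, versus $\mathrm{Frac}(B)$ itself), and that is precisely what its Lemma \ref{lemma:int closure of DVR} is for.

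The gap is that the step you yourself call the heart of the argument --- upgrading ``finitely generated'' to ``algebraic'' --- is not proved, and all of the theorem's content is concentrated there. You reduce it to two claims and establish neither. Claim (a), injectivity of $A\langle X\rangle\to B$, $X\mapsto t$: the Weierstrass argument you give is complete only over a field base, where every nonzero element of $K\langle X\rangle$ is a unit times a polynomial; over an affinoid base $A$, Weierstrass preparation applies only to $X$-distinguished series, distinguishedness cannot in general be arranged by $A$-algebra automorphisms of $A\langle X\rangle$ (unlike the coordinate changes available over a field), and the proposed fix of ``localizing at a point of $\mathrm{Sp}(A)$'' does not obviously help: residue fields at closed points are finite over $K$, so reducing modulo a maximal ideal destroys exactly the transcendence information you are trying to track. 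Claim (b), that $\mathrm{Frac}(A\langle X\rangle)/\mathcal{E}$ is not finitely generated, is left entirely as a black box, and it is nothing other than the contrapositive of the theorem you are proving, applied to the evidently non-finite map $A\to A\langle X\rangle$ (source and target are domains, so condition (2) for this map is precisely the finite generation of $\mathrm{Frac}(A\langle X\rangle)$ over $\mathcal{E}$). So as written, the proposal reduces the theorem to a special case of itself and stops. That special case is not easy: a cardinality count works only for countable $K$ (for $K=\mathbb{C}_p$ one has $|K|^{\aleph_0}=|K|$), and exhibiting power series in $A\langle X\rangle$ provably algebraically independent over $\mathcal{E}(X)$ --- not merely over $K(X)$ --- is delicate.

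What the paper does instead, and what you would need to import to close the gap, is the valuation-theoretic exclusion of Lemma \ref{lemma:int closure of DVR}: by Krull--Akizuki together with the existence of dominating discrete valuations (Stacks Tag 00PH), both of which use the finite generation of the field extension, for every discrete valuation ring $\mathcal{O}_v$ of $\mathcal{K}$ the intersection of all $\mathcal{O}_w$ with $w\in\mathbb{X}(\mathrm{Frac}(B))$ dominating $\mathcal{O}_v$ equals the integral closure of $\mathcal{O}_v$ in $\mathrm{Frac}(B)$, which lies in $\mathcal{K}$. Combined with the description $B=\bigcap_{w}\mathcal{O}_w$ from Theorem \ref{theo:reconst2}, this excludes transcendental (and non-integral algebraic) elements of $B$ in one stroke, with no need for the relative disk $A\langle X\rangle$ at all.
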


As a second application, we give an alternative proof of the following theorem found in \cite{guo2021prismaticcohomologyrigidanalytic} (using the analogue of the Riemann Hebbarkeitssatz from \cite{1976}) and plays a central role in the construction of the prismatic envelope in the $\mathbb{B}_{\mathrm{dR}}^{+}$-prismatic site.

\begin{theo}[{\cite[Proposition 2.1.1]{guo2021prismaticcohomologyrigidanalytic}}, Theorem \ref{prismatic proof}]\label{theo: prismatic}
    Let $A$, $B$ be affinoid $K$-algebras and $R$ a finitely generated $B$-algebra. Then for a $K$-algebra homomorphism $f: A \to R$, its image $f(A)$ is integral over $B$, that is, each element of $f(A)$ is integral over $B$.
\end{theo}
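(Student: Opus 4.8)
The plan is to reduce to the case where every ring in sight is a domain, and then, one element at a time, to detect integrality over $B$ by testing against discrete valuations and feeding the resulting information back through Theorem \ref{theo:intersection of discrete val}.

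First I would carry out three routine reductions. Since $R$ is Noetherian it has finitely many minimal primes $\mathfrak p_1,\dots,\mathfrak p_k$; if each image of $f(A)$ in $R/\mathfrak p_i$ is integral over $B$, witnessed by a monic polynomial $P_i\in B[T]$ with $P_i(x)\in\mathfrak p_i$, then $\prod_i P_i(x)$ lies in the nilradical, so some power of it vanishes and $x$ is integral over $B$. Thus I may assume $R$ is a domain. Replacing $B$ by its image $B/\ker(B\to R)$ and $A$ by $A':=f(A)=A/\ker f$ — both affinoid domains, the latter embedding in $R$ — it suffices to prove that every $x\in A'$ is integral over $B$, where now $B\subseteq R$ are domains with $R$ finitely generated over $B$. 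Write $\mathcal F=\mathrm{Frac}(B)$, $\mathcal E_{A'}=\mathrm{Frac}(A')$, $\mathcal E_R=\mathrm{Frac}(R)$, so $\mathcal F\subseteq\mathcal E_R$ and $\mathcal E_{A'}\subseteq\mathcal E_R$, and let $B'$ be the normalization of $A'$, a normal affinoid domain finite over $A'$ because affinoid algebras are excellent.

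Fix $x\in A'$. I would first show $x$ is algebraic over $\mathcal F$. If it were transcendental, then $\mathcal F(x)\subseteq\mathcal E_R$ is a rational function field, and the order-of-pole-at-infinity valuation on it is trivial on $\mathcal F$ and negative on $x$; I would extend it to a discrete valuation $w$ of $\mathcal E_R$. Restricting $w$ to $\mathcal E_{A'}$ yields a discrete valuation $v''$ (its value group is a nonzero subgroup of $\mathbb Z$) with $v''(x)<0$. When $K$ is not discretely valued this contradicts $x\in A'\subseteq B'=\bigcap_v O_v$ from Theorem \ref{theo:intersection of discrete val}. When $K$ is discretely valued I first rescale, choosing $n$ with $\pi^n x\in (A')^\circ\subseteq (B')^\circ=\bigcap_v O_v$, so that $v''(\pi^n x)\ge 0$, again a contradiction since $w$, hence $v''$, is trivial on $K$. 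Knowing $x$ algebraic, $\mathcal F(x)/\mathcal F$ is finite, and I let $\widetilde B$ be the integral closure of $B$ in $\mathcal F(x)$; by excellence it is a normal affinoid domain finite over $B$ with $\mathrm{Frac}(\widetilde B)=\mathcal F(x)$, so Theorem \ref{theo:intersection of discrete val} applies to it. To show $x\in\widetilde B$ I test each discrete valuation $v$ of $\mathcal F(x)$: extend it to a discrete valuation $w$ of $\mathcal E_R$, restrict $w$ to $\mathcal E_{A'}$, and use $x\in A'\subseteq B'$ (resp. $\pi^n x\in (A')^\circ\subseteq (B')^\circ$ in the discretely valued case) together with Theorem \ref{theo:intersection of discrete val} to conclude $v(x)\ge 0$ (resp. $v(\pi^n x)\ge 0$). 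Hence $x\in\widetilde B$ directly, or $\pi^n x\in\widetilde B^\circ\subseteq\widetilde B$ and then $x=\pi^{-n}(\pi^n x)\in\widetilde B$ because $\pi^{-n}\in K\subseteq B\subseteq\widetilde B$; either way $x$ is integral over $B$.

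The main obstacle I expect is purely valuation-theoretic: the repeated step of extending a given discrete valuation of a subfield to a genuinely discrete (rank-one, $\mathbb Z$-valued) valuation of the finitely generated extension $\mathcal E_R$, so that its restriction to $\mathcal E_{A'}$ is again discrete and Theorem \ref{theo:intersection of discrete val} becomes applicable. This is where Abhyankar-type control of value groups under finitely generated field extensions must be invoked, with care taken to keep the valuation discrete rather than merely of rank one.
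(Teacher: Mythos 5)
Your proposal is correct, and it reaches the theorem by a genuinely different route than the paper. The paper also reduces first to the case where $R$ is a domain, but for the non-reduced case it cites Step 1 of \cite[Proposition 2.1.1]{guo2021prismaticcohomologyrigidanalytic}, whereas your product-of-monic-polynomials argument through the minimal primes is self-contained. In the domain case the paper argues globally rather than element-by-element: it forms the integral closure $\overline{B}$ of $B$ in all of $\mathrm{Frac}(R)$ --- finite over $B$ by \cite[Tag 037J]{stacks-project} (finiteness of the relative algebraic closure in a finitely generated field extension) plus the Nagata property --- then shows $A^\circ\subseteq\bigcap_{w\in\mathbb{X}(\mathrm{Frac}(R))}\mathcal{O}_w$ by applying Lemma \ref{lemma: image of complete ring to dis value field} directly to the discrete valuations of $\mathrm{Frac}(R)$, and finally shows this intersection lands in $\overline{B}$ using Lemma \ref{lemma:int closure of DVR}. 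You instead work one element $x$ at a time and never invoke Lemma \ref{lemma: image of complete ring to dis value field} or Tag 037J: all arithmetic input enters through Theorem \ref{theo:intersection of discrete val}, applied to the normalization of $f(A)$ (to rule out transcendence of $x$ and to force nonnegativity of valuations) and to the integral closure $\widetilde{B}$ of $B$ in $\mathrm{Frac}(B)(x)$ (to conclude $x\in\widetilde{B}$, hence integrality); the only finiteness you need is that of $\widetilde{B}$ over $B$, which follows from the Nagata property alone because $\mathrm{Frac}(B)(x)/\mathrm{Frac}(B)$ is finite once algebraicity is known. Your version is more modular (the Henselian-pair lemma is used only through the already-proved intersection theorem) at the cost of extra bookkeeping: restricting valuations to subfields, including the trivial-restriction case, and fixing one $n$ with $\pi^n x$ power-bounded that serves all valuations simultaneously in the discretely valued case --- both of which you handle correctly. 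Finally, the ``main obstacle'' you flag, extending a discrete valuation of a subfield to a discrete valuation of the finitely generated extension $\mathrm{Frac}(R)$, is not a genuine gap: it is exactly what \cite[Tag 00PH]{stacks-project} supplies (a DVR with fraction field $L$ dominating a given Noetherian local domain, when $L$ is finitely generated over its fraction field), which is the same ingredient the paper itself uses to prove Lemma \ref{lemma:int closure of DVR}; since valuation rings are maximal with respect to domination, such an extension automatically restricts on $\mathrm{Frac}(B)(x)$ to (an equivalent of) the valuation you started from, and its restriction to intermediate fields such as $\mathrm{Frac}(f(A))$ is either trivial or again discrete because its value group is a subgroup of $\mathbb{Z}$, as you note. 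So no Abhyankar-type machinery beyond the paper's own toolkit is required.
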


The discussion in this paper also holds for overconvergent affinoid algebras (a.k.a. dagger algebras) (see the remark of Lemma \ref{lemma: image of complete ring to dis value field}).

\subsection*{Acknowledgements}

We thank Shiji Lyu for his insightful discussions and suggestions.
\newpage
\section*{Notations}
\noindent\begin{itemize}
    \item For a topological ring $A$, denote by $A^\circ$ the subring of power bounded elements. In particular, if $A$ is a normed ring, $A^\circ$ consists of those elements $x$ satisfying that
    \[\sup\{||x^{n}||:n\geq 0\}<\infty,\]
    where $||-||$ is the norm on $A$.

    \item For an integral domain $R$, denote by $\mathrm{Frac}(R)$ the field of fractions of $R$.

    \item `DVR' is the shorten of `discrete valuation ring', i.e., a one dimensional normal noetherian integral domain.

    \item By a discrete valuation $v$ of a field $K$, we always mean an normalized additive valuation. That is, a surjection 
\[ v: K \to \mathbb{Z} \cup \{\infty\} \]
that satisfies the following axioms for all $a, b \in K$:
\begin{enumerate}[label=(\alph*)]
    \item $v(a) = \infty$ if and only if $a = 0$;
    \item $v(ab) = v(a) + v(b)$;
    \item $v(a + b) \ge \min\{v(a), v(b)\}$.
\end{enumerate}

    \item For a field $K$, denote by $\mathbb{X}(K)$ the set of discrete valuations of $K$. In other words, $\mathbb X(K)$ is the set of all subrings $R\subseteq K$ which is a DVR and $\mathrm{Frac}(R)=K$. For $v\in\mathbb X(K)$, denote by $\calO_v$ the valuation ring of $v$.
\end{itemize}
\newpage
\section{First properties of affinoid algebras}

Fix a complete non-Archimedean field $K$. The main object of study in this paper is the affinoid algebras over $K$. In this chapter, we introduce their basic concepts and properties. Recall the following definition:

\begin{defi}[{\cite[Section 2.2, Definition 2]{Bosch_2014}}]
    For any positive integer $n$, the $n$-th \emph{Tate algebra} $T_{K,n}$ of $K$ (abbreviated as $T_n$ in the sequel) is defined as the subring of $K[[z_1,z_2,\dots,z_n]]$ consisting of those elements with convergence radius less than or equal to $1$. More precisely,
    \[
    T_n = \left\{ f = \sum_{\alpha \in \mathbb{Z}_{\geq 0}^n} c_{\alpha} z_1^{\alpha_1} z_2^{\alpha_2} \dots z_n^{\alpha_n} : c_{\alpha} \xrightarrow{|\alpha| = \sum_{i=1}^n \alpha_i \to \infty} 0 \right\}.
    \]
    Also, define $T_0=K$.
\end{defi}

According to \cite[Section 2.2, Proposition 3]{Bosch_2014}, there is a natural norm (Gauss norm) on the Tate algebra $T_n$ which makes it a Banach algebra over $K$. Furthermore, $T_n$ shares many properties similar to the polynomial ring, among which the most important is the following:

\begin{theo}\label{theo: tate is noetherian}
    The Tate algebra $T_n$ is a noetherian ring, and any of its ideals is a closed subset.
\end{theo}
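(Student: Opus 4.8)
The plan is to prove both assertions simultaneously by induction on $n$, with the \emph{Weierstrass Division Theorem} serving as the central engine. The base case $n=0$ is trivial: $T_0 = K$ is a field, so it is noetherian and its only ideals $(0)$ and $K$ are both closed. For the inductive step I would first develop the two Weierstrass theorems for $T_n$. Call an element $g = \sum_i a_i z_n^i \in T_n$ (with $a_i \in T_{n-1}$) \emph{$z_n$-distinguished of degree $s$} if $|a_s|$ equals the Gauss norm $|g|$ while $|a_i| < |g|$ for all $i > s$. The Weierstrass Division Theorem asserts that for such a $g$ and any $f \in T_n$ there are unique $q \in T_n$ and $r \in T_{n-1}[z_n]$ with $\deg_{z_n} r < s$ such that $f = qg + r$; the Weierstrass Preparation Theorem, an immediate consequence, factors $g = e \cdot \omega$ with $e \in T_n^\times$ a unit and $\omega \in T_{n-1}[z_n]$ a monic polynomial of degree $s$.

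Granting these tools, the noetherian property follows quickly. Given a nonzero ideal $\mathfrak{a} \subseteq T_n$, choose $0 \neq g \in \mathfrak{a}$; after a Weierstrass change of coordinates $z_i \mapsto z_i + z_n^{c_i}$ (with suitable exponents $c_i$), which is a norm-preserving $K$-algebra automorphism of $T_n$, I may assume $g$ is $z_n$-distinguished of some degree $s$. The Division Theorem then shows that every element of $T_n$ is congruent modulo $gT_n$ to a polynomial in $T_{n-1}[z_n]$ of degree $< s$, so $T_n/gT_n$ is generated as a $T_{n-1}$-module by the images of $1, z_n, \dots, z_n^{s-1}$. Since $T_{n-1}$ is noetherian by the inductive hypothesis, this finite module is a noetherian $T_{n-1}$-module, and hence $T_n/gT_n$ is a noetherian ring. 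Pulling back a finite generating set of the image of $\mathfrak{a}$ in $T_n/gT_n$ and adjoining $g$ yields finitely many generators of $\mathfrak{a}$.

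For closedness I would exploit the same reduction: the Division Theorem exhibits $T_n/gT_n$, and hence its quotient $T_n/\mathfrak{a}$, as a \emph{finitely generated} module over the complete ring $T_{n-1}$. A finitely generated module over $T_{n-1}$ carries a canonical complete norm under which every submodule is closed, and the residue norm on $T_n/\mathfrak{a}$ is equivalent to this module norm; consequently $\mathfrak{a}$, being the kernel of the continuous projection onto a Hausdorff quotient, is closed. The main obstacle is the Division Theorem itself: establishing existence and uniqueness of $q$ and $r$ requires delicate Gauss-norm estimates to ensure that the formally constructed quotient series genuinely converges in $T_n$ and that the defining iteration is contracting, so that it stabilizes. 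Everything else — the coordinate change, the passage to $T_{n-1}[z_n]$, and the finiteness-and-completeness bookkeeping behind closedness — is comparatively routine once the analytic heart of the division algorithm is in place.
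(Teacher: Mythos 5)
The paper does not actually prove this theorem: its ``proof'' is a citation to Bosch's book (Section 2.2, Proposition 14 for noetherianity; Section 2.3, Corollary 8 for closedness of ideals). Your sketch is, in structure, precisely the argument carried out in that reference: simultaneous induction on $n$, a Weierstrass change of coordinates $z_i \mapsto z_i + z_n^{c_i}$ to make a chosen nonzero $g \in \mathfrak{a}$ distinguished, Weierstrass division to exhibit $T_n/gT_n$ as a finite (indeed free of rank $s$) $T_{n-1}$-module, and then the standard theory of finitely generated normed modules over a noetherian Banach algebra with closed ideals. So you are supplying, by the same route, the proof that the paper outsources. One presentational remark: the closedness step (``every submodule of a finitely generated $T_{n-1}$-module is closed in its canonical topology'') is exactly where the inductive hypothesis that ideals of $T_{n-1}$ are closed gets used; making this explicit would justify why the two assertions must be proved by simultaneous induction rather than separately.

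There is, however, one genuine error in the statement of your central tool. You define $g = \sum_i a_i z_n^i$ to be $z_n$-distinguished of degree $s$ when $|a_s| = |g|$ and $|a_i| < |g|$ for all $i > s$. The correct definition also requires $a_s$ to be a \emph{unit} of $T_{n-1}$, and without that clause the Division Theorem you state is false. Concretely, take $g = z_1 \in T_2$, read as a series in $z_2$: its only coefficient is $a_0 = z_1$, which has Gauss norm $|g| = 1$, so $g$ is ``distinguished of degree $0$'' in your sense; division of degree $0$ would then force every $f \in T_2$ to lie in $z_1 T_2$, which is absurd. Since your whole plan rests on this theorem, and you explicitly defer its proof as ``the analytic heart,'' the misstatement matters: no Gauss-norm estimates will prove the version you wrote down, because the unit condition on the leading coefficient is exactly what makes the division iteration contract. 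Fortunately the repair does not disturb your architecture: the automorphism $z_i \mapsto z_i + z_n^{c_i}$, for suitable $c_i$, turns any nonzero $g$ into an element that is distinguished in the correct, stronger sense (its leading $z_n$-coefficient becomes a unit of $T_{n-1}$ of full norm), and the rest of your argument---generation of $\mathfrak{a}$ by $g$ together with lifts of $T_{n-1}$-module generators of its image, and closedness via pullback along the continuous map $T_n \to T_n/gT_n$---goes through verbatim.
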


\begin{proof}
    See \cite[Section 2.2, Proposition 14]{Bosch_2014} for the proof of noetherian property of $T$ and \cite[Section 2.3, Corollary 8]{Bosch_2014} for the proof of the closeness of ideals.
\end{proof}

In particular, for an ideal $I \subseteq T_n$, the quotient ring $A = T_n / I$ is a Noetherian Banach $K$-algebra. This leads to the following definition:

\begin{defi}[{\cite[Section 3.1, Definition 1]{Bosch_2014}}]
    An affinoid $K$-algebra is a $K$-algebra that is isomorphic to a quotient of some $T_n$.
\end{defi}

It is important to emphasize that the above definition imposes no topological requirements on $A$. In fact, the following theorem shows that for an affinoid $K$-algebra $A$, the topology on $A$ is entirely determined by $A$ itself.

\begin{theo}\label{theo: canonical topology on affd}
    Suppose $m, n$ are two nonnegative integers, $I \subseteq T_n$ is an ideal, and let $A = T_n / I$. Then for any $K$-homomorphism $f: T_m \to A$, $f$ is continuous, where $A = T_n / I$ is endowed with the quotient topology.
\end{theo}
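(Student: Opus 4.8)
The plan is to prove continuity of $f : T_m \to A$ by reducing to the universal property of the Tate algebra together with the closed graph / open mapping philosophy for Banach $K$-algebras. First I would recall that a $K$-homomorphism out of $T_m$ is determined by the images $f(z_1), \dots, f(z_m) \in A$, so the content of the theorem is that such a homomorphism is automatically continuous regardless of where the generators go. The key structural fact I would invoke is that $A = T_n/I$ carries its residue (quotient) norm, under which it is a Banach $K$-algebra by Theorem \ref{theo: tate is noetherian} (the ideal $I$ is closed, so the quotient is complete). Thus both source and target are Banach $K$-algebras and $f$ is a $K$-linear algebra map between them.

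The heart of the argument is the following reduction. Since $f$ is a homomorphism, $\|f(z_i)\|$ is some finite real number $c_i$ for each $i$; choose $c \geq 1$ bounding all of them. I would then argue that for a monomial $z^\alpha = z_1^{\alpha_1}\cdots z_m^{\alpha_m}$ one has $\|f(z^\alpha)\| \leq c^{|\alpha|}$ by submultiplicativity of the norm on $A$. The subtlety is that an element of $T_m$ is an \emph{infinite} convergent sum $g = \sum_\alpha a_\alpha z^\alpha$ with $|a_\alpha|\to 0$, and I must know that $f$ respects this sum, i.e. $f(g) = \sum_\alpha a_\alpha f(z^\alpha)$ as a convergent series in $A$. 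A priori $f$ is only an algebraic homomorphism and need not commute with infinite sums — this is precisely the point the theorem has to overcome, and it is where I expect the main obstacle to lie. If one could assume continuity one would be done immediately, so the argument cannot simply invoke it; instead I would bound the purely algebraic image on the dense subalgebra $K[z_1,\dots,z_m] \subseteq T_m$, where everything is a finite sum, and show the resulting linear map is bounded there.

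Concretely, for a polynomial $g = \sum_{|\alpha|\le N} a_\alpha z^\alpha$ one gets $\|f(g)\| \le \max_\alpha |a_\alpha|\, c^{|\alpha|}$ by the ultrametric and submultiplicative inequalities. This is \emph{not} yet bounded by a constant times the Gauss norm $\|g\| = \max_\alpha |a_\alpha|$ unless $c \leq 1$, so a naive estimate fails for $c > 1$; the genuine reason the theorem is true is that the map is bounded on the closed unit ball, which I would need to extract more carefully. The cleanest route is to reduce to the case $c \le 1$: after rescaling the variables, or rather by observing that the $f(z_i)$ are power-bounded is false in general, so instead I would appeal to the structure of $A$ as a quotient and lift $f$ along the surjection $T_n \twoheadrightarrow A$. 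More precisely, I would factor $f$ through a map $T_m \to T_n/I$ and use that $T_n$ itself, being the completion of the polynomial ring under the Gauss norm, has the universal property that a bounded choice of generators extends continuously; the task is to reconcile an arbitrary $f$ with a continuous one. I expect the clean resolution to use that $A$ is Noetherian and that the topology is canonical, invoking Theorem \ref{theo: tate is noetherian} so that $I$ is closed and the quotient norm is well-behaved, and then to deduce boundedness on the unit ball from completeness via a Baire-category / open-mapping argument applied to the continuous polynomial map and its extension.

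The main obstacle, to state it sharply, is bridging the gap between an \emph{algebraic} $K$-homomorphism and a \emph{topological} (continuous) one: the proof must show that the purely formal map $f$ automatically respects the convergence structure of $T_m$, which amounts to proving $f$ is bounded on the dense polynomial subalgebra and then extending by completeness. I would structure the final write-up as: (1) equip $A$ with its quotient norm and record that it is Banach; (2) reduce to bounding $f$ on $K[z_1,\dots,z_m]$; (3) produce a uniform bound on the unit ball using submultiplicativity together with the Noetherian/closed-ideal structure of $A$ and a lifting to $T_n$; (4) conclude that $f$ extends to the completion and agrees with the given map, hence is continuous. The decisive input throughout will be Theorem \ref{theo: tate is noetherian}, ensuring $I$ is closed so that all quotient norms in sight define complete topologies.
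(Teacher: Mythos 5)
The paper itself gives no argument here: it simply cites \cite[Section 3.1, Proposition 20]{Bosch_2014}. Measured against the standard proof of that result, your proposal correctly isolates the central difficulty --- an abstract $K$-algebra homomorphism need not commute with the infinite sums defining elements of $T_m$ --- but it does not overcome it, and two of its load-bearing steps are problematic.

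First, the opening claim that $f$ is determined by $f(z_1),\dots,f(z_m)$ is essentially as hard as the theorem itself: it holds for \emph{continuous} homomorphisms because $K[z_1,\dots,z_m]$ is dense in $T_m$, but for an abstract homomorphism nothing pins down the values on non-polynomial elements, so taking this as a starting point is circular. Second, the endgame is never carried out: you observe (correctly) that the naive bound $\|f(g)\|\le \max_\alpha |a_\alpha|\, c^{|\alpha|}$ is useless when $c>1$, and you then defer to ``a Baire-category / open-mapping argument applied to the continuous polynomial map and its extension,'' but there is no surjection of Banach spaces in sight to which the open mapping theorem could apply, and no candidate continuous extension exists until one already knows the $f(z_i)$ are power-bounded --- which is essentially the statement to be proved. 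The ingredients that make the standard proof work are absent from the proposal: one applies the \emph{closed graph} theorem to $f$ directly, and verifies closedness of the graph by composing with the quotient maps $A \to A/\mathfrak{m}^k$ for maximal ideals $\mathfrak{m}$ of $A$; since residue fields of maximal ideals of affinoid algebras are finite over $K$, each $A/\mathfrak{m}^k$ is a finite-dimensional $K$-vector space, and because all ideals of $T_m$ are closed (Theorem \ref{theo: tate is noetherian}) every $K$-homomorphism from $T_m$ to a finite-dimensional $K$-algebra is automatically continuous; one then concludes with the Krull intersection theorem and the Jacobson property of affinoid algebras. Without this passage to finite-dimensional quotients, the gap you flag remains open.
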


\begin{proof}
    See \cite[Section 3.1, Proposition 20]{Bosch_2014}.
\end{proof}

For any affinoid $K$-algebra $A$ and a surjection $f: T_n \to A$, endow $A$ with the quotient topology. Then Theorem \ref{theo: canonical topology on affd} shows that this topology does not depend on the choice of $n$ or the surjection $f$. Moreover, this topology is actually induced by a Banach algebra structure on $A$.
The following theorem is also important.

\begin{theo}\label{theo: reduce affd are uniform}
    Let $A$ be a reduced affinoid $K$-algebra. Then $A$ is \emph{uniform}. That is, the ring $A^\circ$ is bounded.
\end{theo}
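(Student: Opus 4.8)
The plan is to prove that the residue (Banach) norm $\|\cdot\|$ on $A$ and the supremum seminorm $|\cdot|_{\sup}$, given by $|f|_{\sup}=\sup_{x\in\mathrm{Max}(A)}|f(x)|$, are equivalent; uniformity is then immediate. First I would record the formal properties of $|\cdot|_{\sup}$: it is a power-multiplicative seminorm with $|f|_{\sup}\le\|f\|$, and it satisfies the spectral radius formula $|f|_{\sup}=\lim_{n\to\infty}\|f^{n}\|^{1/n}$. Since $A$ is reduced, $|f|_{\sup}=0$ forces $f$ to be nilpotent and hence zero, so $|\cdot|_{\sup}$ is in fact a norm. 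The spectral radius formula gives the easy inclusion at once: if $f\in A^{\circ}$ then $\|f^{n}\|$ is bounded, whence $|f|_{\sup}=\lim_{n}\|f^{n}\|^{1/n}\le 1$. Thus $A^{\circ}\subseteq\{f:|f|_{\sup}\le 1\}$, and the whole statement reduces to producing a constant $C$ with $\|f\|\le C\,|f|_{\sup}$ for all $f$; granting this, every $f\in A^{\circ}$ satisfies $\|f\|\le C$, so $A^{\circ}$ is bounded.

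For the reverse inequality I would invoke Noether normalization to obtain a finite injective $K$-homomorphism $T_{d}\hookrightarrow A$ exhibiting $A$ as a finite $T_{d}$-module, together with the classical maximum modulus principle on the Tate algebra, which says that on $T_{d}$ the supremum seminorm agrees with the Gauss norm. Fix a set of $T_{d}$-module generators $e_{1},\dots,e_{r}$ of $A$; then $\|\cdot\|$ is equivalent to the norm $\|\sum_{i}g_{i}e_{i}\|'=\max_{i}\|g_{i}\|_{\mathrm{Gauss}}$, so it suffices to bound the coordinates $g_{i}$. Suppose first that the extension $\mathrm{Frac}(A)/F$ is separable, where $F=\mathrm{Frac}(T_{d})$ and $\mathrm{Frac}(A)$ denotes the total ring of fractions (a finite product of fields, as $A$ is reduced). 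Choosing an $F$-basis $e_{1},\dots,e_{r}$ of $\mathrm{Frac}(A)$ inside $A$, the discriminant $\delta\in T_{d}$ is nonzero, and the elements $\delta e_{i}^{*}$ of $\mathrm{Frac}(A)$, where $(e_{i}^{*})$ is the trace-dual basis, are integral over $T_{d}$ and hence of finite supremum seminorm. Writing $f=\sum_{i}g_{i}e_{i}$ one has $\delta g_{i}=\mathrm{Tr}_{\mathrm{Frac}(A)/F}(f\cdot\delta e_{i}^{*})\in T_{d}$, and since the trace is a sum of conjugates the ultrametric inequality yields $|\delta g_{i}|_{\sup}\le|f|_{\sup}\cdot\max_{i}|\delta e_{i}^{*}|_{\sup}=:C_{1}|f|_{\sup}$. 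On $T_{d}$ the supremum seminorm is the Gauss norm, which is moreover multiplicative, so $\|g_{i}\|_{\mathrm{Gauss}}=\|\delta g_{i}\|_{\mathrm{Gauss}}/\|\delta\|_{\mathrm{Gauss}}\le (C_{1}/\|\delta\|_{\mathrm{Gauss}})\,|f|_{\sup}$. Combining these bounds over the finitely many $i$ gives $\|f\|\le C\,|f|_{\sup}$, as required.

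The step I expect to be the main obstacle is removing the separability hypothesis, which can genuinely fail when $\mathrm{char}\,K=p>0$, in which case the discriminant $\delta$ vanishes and the trace form degenerates. I would address this by one of two routes. The first is to modify the Noether normalization so as to make $\mathrm{Frac}(A)/F$ separable --- for instance by enlarging $T_{d}$ to a finite purely inseparable extension inside $\mathrm{Frac}(A)$, or by composing with a Frobenius --- after which the argument above applies and the resulting norm bound descends to $A$. The second, more robust, route uses that Tate algebras are excellent, so that the normalization $\widetilde{A}$ of $A$ is a finite $A$-module and is a normal reduced affinoid algebra; here the supremum unit ball admits the clean description as the integral closure of $\{g\in T_{d}:|g|_{\mathrm{Gauss}}\le 1\}$, and the norm equivalence on $\widetilde{A}$ transfers back to $A$ through the finite inclusion $A\hookrightarrow\widetilde{A}$, whose two module norms are equivalent. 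Once the inequality $\|f\|\le C\,|f|_{\sup}$ is secured in all cases, the two norms are equivalent; in particular $A^{\circ}=\{f:|f|_{\sup}\le 1\}$ is contained in the ball $\{\|f\|\le C\}$, so $A^{\circ}$ is bounded and $A$ is uniform.
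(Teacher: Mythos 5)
The paper does not prove this statement at all: it is quoted as a known classical result, namely the theorem of Bosch--G\"untzer--Remmert (\emph{Non-Archimedean Analysis}, Theorem 6.2.4/1) that on a reduced affinoid algebra the supremum norm is equivalent to any residue norm. So the comparison here is with the literature proof, and your skeleton is exactly that proof: $|f|_{\sup}\le\|f\|$, the spectral radius formula forcing $A^\circ\subseteq\{f:|f|_{\sup}\le 1\}$, and the reverse bound $\|f\|\le C|f|_{\sup}$ via Noether normalization, the maximum principle on $T_d$, and the discriminant/trace-dual argument when $\mathrm{Frac}(A)/\mathrm{Frac}(T_d)$ is separable. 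Two repairable defects in the separable part: you should first reduce to the case where $A$ is a domain (embed $A\hookrightarrow\prod_i A/\mathfrak{p}_i$, a closed embedding by finiteness and the open mapping theorem, under which sup norms match up), because if $A$ is not a domain its total fraction ring need not even be a module over $F=\mathrm{Frac}(T_d)$ --- e.g.\ $A=K\times T_1$ with Noether normalization $g\mapsto(g(0),g)$, where $z$ kills the factor $K$ --- so ``an $F$-basis of $\mathrm{Frac}(A)$'' is not meaningful in general. Also, your norm $\|\cdot\|'$ built from module generators and the one built from a field basis are different objects; the clean formulation is the sandwich $M\subseteq A\subseteq\delta^{-1}M$ with $M=\bigoplus_i T_d e_i$ free, combined with the fact that any two Banach module norms on a finite module over $T_d$ are equivalent.

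The genuine gap is the inseparable case --- exactly where you yourself expected trouble --- and neither of your routes closes it. Route 1 rests on a false field-theoretic premise: a finite extension $L/F$ factors as separable followed by purely inseparable, \emph{not} the other way round, so the purely inseparable closure of $F$ in $L$ can be trivial while $L/F$ is inseparable; hence ``enlarging $T_d$ to a finite purely inseparable extension inside $\mathrm{Frac}(A)$'' need not make the top extension separable. Worse, a Noether normalization with separable fraction-field extension may simply not exist over imperfect $K$: for $K=\mathbb{F}_p((t))$ and $A=K(t^{1/p})$ the only normalization is $K\hookrightarrow A$ itself. And even when a factorization $F\subseteq F'\subseteq L$ with $L/F'$ separable does exist, running the trace argument over the intermediate affinoid ring requires the norm equivalence for \emph{that} ring, whose own normalization is purely inseparable --- circular. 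Route 2 is circular as well: identifying $\widetilde{A}^\circ$ with the integral closure of $T_d^\circ$ gives no bound on it, since boundedness of that integral closure \emph{is} what is to be proved; the only tool you offer for it is again the trace pairing (i.e.\ separability), and normalizing $A$ does nothing to the inseparability of $\mathrm{Frac}(A)/F$ (integral closures of normal Noetherian domains in inseparable extensions can genuinely fail to be finite). The classical repair is a ground-field extension: pass from $K$ to $K^{1/p^n}$ (completed, if $[K:K^p]=\infty$) and from $T_d$ to $K^{1/p^n}\langle z_1^{1/p^n},\dots,z_d^{1/p^n}\rangle$; for $n\gg 0$ the relevant compositum becomes separable over the fraction field of the new Tate algebra, one runs the trace argument upstairs, and one descends the bound to $A$ along a closed embedding. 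Supplying this reduction, and the care it needs when $[K:K^p]=\infty$, is the actual content of the classical proof and is missing from your proposal.
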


A direct corollary of this Theorem is the following.

\begin{theo}\label{theo: noetherian of integer when base is discrete}
    Assume $K$ is discrete valued. For any reduced affinoid $K$-algebra $A$, $A^\circ$ is noetherian.
\end{theo}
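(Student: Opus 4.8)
The plan is to exhibit $A^{\circ}$ as a finite module over a noetherian subring, so that module-finiteness over a noetherian ring forces $A^{\circ}$ itself to be noetherian. Write $K^{\circ}$ for the valuation ring of $K$, which under the discreteness hypothesis is a complete DVR with a uniformizer $\pi$. The key preliminary observation is that the integral Tate algebra $T_n^{\circ}=K^{\circ}\langle z_1,\dots,z_n\rangle$ (the power-bounded elements of $T_n$, i.e.\ the restricted power series with coefficients in $K^{\circ}$) is noetherian: it is precisely the $\pi$-adic completion $\varprojlim_m K^{\circ}[z_1,\dots,z_n]/\pi^m$ of the polynomial ring $K^{\circ}[z_1,\dots,z_n]$, because for discretely valued $K$ coefficientwise $\pi$-adic convergence to $0$ coincides with convergence to $0$ in norm. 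Since $K^{\circ}$ is noetherian and the $\pi$-adic completion of a noetherian ring is noetherian, $T_n^{\circ}$ is noetherian. This is the one step where discreteness of $K$ is essential: for non-discrete $K$ the ring $K^{\circ}$ is already non-noetherian.

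Next I would fix a surjection $\phi\colon T_n\twoheadrightarrow A$ and set $A_0=\phi(T_n^{\circ})$. As the image of the noetherian ring $T_n^{\circ}$ under a ring homomorphism, $A_0=T_n^{\circ}/(\ker\phi\cap T_n^{\circ})$ is again noetherian. I then claim the chain of inclusions $A_0\subseteq A^{\circ}\subseteq \pi^{-N}A_0$ for some integer $N$. The left inclusion is immediate: $\phi$ is continuous (Theorem \ref{theo: canonical topology on affd}), so it carries power-bounded elements to power-bounded elements, and every element of $T_n^{\circ}$ is power-bounded in $T_n$. For the right inclusion I would invoke uniformity: since $A$ is reduced, Theorem \ref{theo: reduce affd are uniform} gives that $A^{\circ}$ is bounded for the residue (Banach) norm, say $\lVert x\rVert\le C$ for all $x\in A^{\circ}$. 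Because $\ker\phi$ is closed (Theorem \ref{theo: tate is noetherian}) and $|K|$ is discrete, the infimum defining the residue norm is attained, so each $x\in A^{\circ}$ admits a representative $f\in T_n$ with $\lVert f\rVert=\lVert x\rVert\le C$. Fixing $N$ with $|\pi|^{N}C\le 1$, we obtain $\pi^{N}f\in T_n^{\circ}$ and hence $\pi^{N}x=\phi(\pi^{N}f)\in A_0$, that is, $A^{\circ}\subseteq\pi^{-N}A_0$.

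Finally I would conclude by a noetherian-module argument. Since $A$ is a $K$-algebra it is $\pi$-torsion-free, so multiplication by $\pi^{N}$ is an $A_0$-linear isomorphism $\pi^{-N}A_0\xrightarrow{\sim}A_0$, exhibiting $\pi^{-N}A_0$ as a free $A_0$-module of rank one, in particular a noetherian $A_0$-module. By the sandwich above, $A^{\circ}$ is an $A_0$-submodule of $\pi^{-N}A_0$, hence a finitely generated $A_0$-module. A module-finite algebra over a noetherian ring is noetherian, so $A^{\circ}$ is noetherian, as desired.

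I expect the main obstacle to be the passage from uniformity to the concrete sandwich $A_0\subseteq A^{\circ}\subseteq\pi^{-N}A_0$ — that is, controlling the power-bounded elements by the image of the integral Tate algebra up to a fixed power of $\pi$ — since this is exactly where boundedness of $A^{\circ}$ (reducedness), closedness of ideals in $T_n$, and discreteness of the value group must all be combined. The noetherianity of $T_n^{\circ}$, though the conceptual heart of why discreteness is needed, is then a formal consequence of the completion description.
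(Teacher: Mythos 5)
Your proof is correct and follows essentially the same route as the paper: the paper likewise takes a noetherian ring of definition $A_0$ (for discretely valued $K$ this is the image of $T_n^\circ$), uses uniformity of the reduced affinoid algebra $A$ (Theorem \ref{theo: reduce affd are uniform}) to sandwich $A^\circ$ between $A_0$ and $\pi^{-N}A_0$, and concludes that $A^\circ$ is a finite $A_0$-module, hence noetherian. Your write-up merely makes explicit the steps the paper leaves implicit (noetherianity of $T_n^\circ$ via $\pi$-adic completion, and the role of closedness of $\ker\phi$ in controlling the residue norm).
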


\begin{proof}
    As $K$ is discrete valued, $A$ has a noetherian ring of definition $A_0$. Let $\pi\in\calO_K$ be a uniformizer, then $A=A_0[\frac{1}{\pi}]$. By Theorem \ref{theo: reduce affd are uniform}, there exists a $N\geq 0$ such that $A^\circ\subset \frac{1}{\pi^N}A_0$. Hence, $A^\circ$ is a finitely generated $A_0$-module.
\end{proof}

By Theorem \ref{theo: tate is noetherian}, any affinoid $K$-algebra is Noetherian. However, for our purposes, we need a stronger property.

\begin{theo}
    Let $A$ be an affinoid $K$-algebra. Then $A$ is an excellent ring (see \cite[\href{https://stacks.math.columbia.edu/tag/07QT}{Tag 07QT}]{stacks-project} for the definition).
\end{theo}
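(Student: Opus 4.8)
The plan is to reduce the statement to the Tate algebra and then verify the conditions defining excellence one at a time. Excellence is inherited by finitely generated algebras over an excellent ring (see \cite{stacks-project}), and by definition every affinoid $K$-algebra is a quotient $T_n/I$, hence a finitely generated $T_n$-algebra. It therefore suffices to prove that each Tate algebra $T_n$ is excellent.

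Recall that a Noetherian ring is excellent precisely when it is universally catenary, a G-ring (its formal fibers are geometrically regular), and J-2 (the regular locus of every finite type algebra over it is open). The Noetherian property is exactly Theorem \ref{theo: tate is noetherian}. For universal catenarity, I would use that $T_n$ is a regular ring, its localizations at maximal ideals being regular local rings of dimension at most $n$; a regular ring is Cohen--Macaulay, and a Cohen--Macaulay ring is universally catenary. These two of the four conditions thus cost essentially nothing.

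The content lies in the G-ring and J-2 conditions. When $K$ is discretely valued I would argue through a ring of definition: fixing a uniformizer $\pi$, one has $T_n = \calO_K\langle z_1,\dots,z_n\rangle[\tfrac1\pi]$, where $\calO_K\langle z_1,\dots,z_n\rangle$ is the $\pi$-adic completion of $\calO_K[z_1,\dots,z_n]$. The latter is of finite type over the complete discrete valuation ring $\calO_K$, hence excellent, and in particular a G-ring; since the $I$-adic completion of a Noetherian G-ring is again a G-ring and localization preserves the G-ring property, $T_n$ is a G-ring. The J-2 property is handled separately, for instance via the Jacobian criterion for the regular locus of finite type $T_n$-algebras afforded by the module of universally finite differentials $\Omega_{T_n/K}$.

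The main obstacle is the case where $K$ is not discretely valued: no Noetherian ring of definition is available and the reduction above collapses, so one must instead establish the geometric regularity of the formal fibers of the localizations of $T_n$ directly. The cleanest route is to invoke the theorem of Ducros that affinoid algebras over an arbitrary complete non-Archimedean field are excellent, whose proof controls these formal fibers by a base-change and approximation argument that reduces to a field admitting a Noetherian ring of definition. I expect this control of the formal fibers in the non-discretely-valued setting to be the only genuinely substantial step, with everything else being formal.
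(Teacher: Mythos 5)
Your proposal is, in substance, the same as the paper's: the paper's entire proof of this theorem is the citation ``See \cite[Remark 3.5.2]{Fresnel_2004}'', and your argument also rests, at its only genuinely hard point, on citing the literature (the excellence theorem for affinoid algebras over an arbitrary complete non-Archimedean field). Two remarks. First, the theorem you invoke at the end --- classically due to Kiehl (\emph{Ausgezeichnete Ringe in der nichtarchimedischen analytischen Geometrie}, 1969), with Ducros's later work extending it to Berkovich spaces and non-strict affinoids --- is stated for \emph{all} affinoid algebras, not just Tate algebras; once you appeal to it, the reduction to $T_n$, the regularity/catenarity verification, and the ring-of-definition argument in the discretely valued case are all logically superfluous, and your proof collapses to exactly the paper's one-line citation. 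Second, read instead as a self-contained argument, the proposal has two gaps: in the discretely valued case the J-2 condition is only gestured at (the Jacobian criterion for regular loci over affinoid algebras is itself the substantial content of Kiehl's paper; alternatively, deducing J-2 from quasi-excellence of $\calO_K\langle z_1,\dots,z_n\rangle$ requires Gabber's theorem that ideal-adic completion preserves quasi-excellence, not merely the G-ring property), and in the non-discretely valued case there is no argument at all beyond the citation. The parts you do prove are correct: stability of excellence under finite type algebras, Noetherianness (Theorem \ref{theo: tate is noetherian}), universal catenarity via regularity of $T_n$, and the G-ring property of $T_n$ for discretely valued $K$ (a polynomial ring over a complete DVR is excellent, ideal-adic completion of a Noetherian G-ring is again a G-ring, and localization preserves G-rings) all hold as stated.
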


\begin{proof}
    See \cite[Remark 3.5.2]{Fresnel_2004}.
\end{proof}

For readers unfamiliar with the property of excellence, we introduce a weaker property of the above theorem (which is already sufficient for our use).
Recall the following definition in commutative algebras.

\begin{defi}
    Let \( R \) be an integral domain. We say that \( R \) is \textbf{N-1} if the integral closure of \( R \) in its \( \mathrm{Frac}(R) \) is finite over $R$. Similarly, \( R \) is said to be \textbf{N-2} if for every finite extension \( E \) of \( \mathrm{Frac}(R) \), the integral closure of \( R \) in \( E \) is finite over \( R \).
\end{defi}

\begin{defi}
    Let \( R \) be a ring. We say that \( R \) is a \textbf{Nagata ring} if \( R \) is Noetherian and for every prime ideal \( \mathfrak{p} \subseteq R \), the quotient ring \( R/\mathfrak{p} \) is N-2.
\end{defi}

The following result plays a key role in our subsequent discussion.

\begin{theo}\label{theo:nagata affd}
    For any affinoid $K$-algebra $A$, the algebra \( A \) is a Nagata ring.
\end{theo}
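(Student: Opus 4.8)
The plan is to obtain the Nagata property as a formal consequence of the excellence theorem just proved, since that is the only use we will make of excellence in the paper. The single input from commutative algebra is the standard implication that every quasi-excellent ring---in particular every excellent ring---is a Nagata ring (a Noetherian $G$-ring that is $J$-$2$ is universally Japanese; see \cite{stacks-project}). As $A$ is excellent by the preceding theorem (equivalently by \cite[Remark 3.5.2]{Fresnel_2004}), it is quasi-excellent, hence Nagata, and this finishes the proof in one line. I would present this as the actual proof.

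For a reader who prefers to avoid the excellence black box, I would give a self-contained reduction to a finiteness statement for Tate algebras, and I sketch it here. First I would reduce to $T_n$: writing $A = T_n/I$, every quotient $A/\mathfrak p$ by a prime is itself a quotient of $T_n$, so checking that all domain quotients of $A$ are N-$2$ is subsumed by checking the same for $T_n$; thus it suffices to prove that $T_n$ is Nagata. (This uses only the elementary fact that the Nagata condition passes to arbitrary quotient rings, which is immediate from the definition.) Next, given a domain quotient $C = T_n/\mathfrak q$ with fraction field $L$ and a finite extension $E/L$, Noether normalization for affinoid algebras exhibits $C$ as a finite extension of some $T_d$; since $C$ is integral over $T_d$, the integral closure of $C$ in $E$ equals that of $T_d$ in $E$, and $E$ is finite over $\mathrm{Frac}(T_d)$. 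Everything therefore comes down to showing that each $T_d$ is Japanese, i.e. that its integral closure in any finite extension of $\mathrm{Frac}(T_d)$ is module-finite.

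For this last point, the separable case is routine: $T_d$ is a regular, hence normal, Noetherian domain, so the trace pairing of a finite separable extension confines the integral closure inside a finitely generated $T_d$-module, and Noetherianity does the rest. The hard part will be the purely inseparable case when $\mathrm{char}\,K = p > 0$, where the trace form degenerates and one must instead control the finiteness of $T_d^{1/p}$ over $T_d$ by hand, using completeness of $K$ and the explicit form of the Gauss norm. This is precisely the delicate step in the classical treatment in \cite{Bosch_2014}, and it is the reason I would keep the excellence argument as the proof of record and relegate the direct computation to a remark or to the cited literature.
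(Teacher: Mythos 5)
Your proof of record---deducing the Nagata property from the excellence of affinoid algebras via the standard implication that quasi-excellent Noetherian rings are Nagata---is correct and is essentially the paper's own approach, which simply cites \cite[Theorem 3.5.1, Remark 3.5.2]{Fresnel_2004} for the same chain of facts. Your supplementary elementary sketch (reduction to $T_n$ via stability of the Nagata condition under quotients, then Noether normalization and the Japanese property of $T_d$, with the purely inseparable case deferred to the literature) is also sound, but as you say it is best left as a remark.
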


\begin{proof}
    See \cite[Theorem 3.5.1, Remark 3.5.2]{Fresnel_2004}.
\end{proof}

\section{Proof of the main theorems}

Throughout this section, we fix a complete non-Archimedean field $K$ and denote by  $\calO_K$ the ring of integers of $K$ and $\mathfrak{m}_Ks$ the maximal ideal of $\calO_K$. We first prove Theorem \ref{theo:intersection of discrete val}.

The goal of this section is to prove the theorems mentioned in the introduction. The key observation is the following analogue of \cite[Theorem 13.1]{Kunz_1986}.

\begin{lemma}\label{lemma: image of complete ring to dis value field}
Let $(R, \mathfrak{m})$ be a Henselian local integral domain with $\mathfrak{m} \neq 0$, $S$ be an $R$-algebra such that $(S, \mathfrak{m} S)$ is a Henselian pair. Let $\mathcal{K}$ be a field containing $R$ as a subring, and $v$ be a discrete valuation on $\mathcal{K}$ with valuation ring $\mathcal{O}_v$. Then for any $R$-algebra homomorphism $f: S \to \mathcal{K}$, we have $f(S) \subseteq \mathcal{O}_v$.
\end{lemma}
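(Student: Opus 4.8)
The plan is to prove, for every $s \in S$, that $v(f(s)) \ge 0$; because $f$ is an $R$-algebra homomorphism and $\mathcal{K}$ contains $R$, the restriction $f|_R$ is the inclusion $R \hookrightarrow \mathcal{K}$, so this statement already contains the necessary special case $R \subseteq \mathcal{O}_v$. Fix $s$, assume $x := f(s) \ne 0$ (otherwise $v(x) = \infty$ and there is nothing to prove), and suppose toward a contradiction that $v(x) = -d$ with $d \ge 1$. The mechanism I will exploit is the tension between two facts: in the Henselian pair $(S, \mathfrak{m}S)$ one can extract $n$-th roots of any element of $1 + \mathfrak{m}S$ whenever $n$ is invertible in the residue field, whereas a discrete valuation has value group $\mathbb{Z}$ and hence admits no nonzero infinitely divisible value.

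Since $\mathfrak{m} \ne 0$, I choose $t \in \mathfrak{m} \setminus \{0\}$ and set $c := v(t) \in \mathbb{Z}$, which is finite because $R \hookrightarrow \mathcal{K}$. For $k \ge 1$ put $u_k := 1 + t s^k$; as $t \in \mathfrak{m}$ we have $t s^k \in \mathfrak{m}S$, so $u_k \in 1 + \mathfrak{m}S$ and $f(u_k) = 1 + t x^k$ with $v(t x^k) = c - dk \to -\infty$. Fixing $k$ large enough that $c - dk < 0$, the ultrametric inequality gives $v(f(u_k)) = c - dk =: e \ne 0$. Now let $n \ge 1$ be invertible in $R/\mathfrak{m}$; then $n$ is invertible in $S/\mathfrak{m}S$, and since $\mathfrak{m}S \subseteq \mathrm{rad}(S)$ it is in fact a unit in $S$. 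The monic polynomial $P(T) = T^n - u_k$ satisfies $P(1) = 1 - u_k \in \mathfrak{m}S$ and $P'(1) = n \in S^\times$, so the Henselian pair property produces $a \in S$ with $a \equiv 1 \pmod{\mathfrak{m}S}$ and $a^n = u_k$.

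Applying $v \circ f$ to $a^n = u_k$ yields $n \cdot v(f(a)) = e$, so $n \mid e$ for every $n$ invertible in $R/\mathfrak{m}$. As there are infinitely many such $n$ while $e$ is a fixed nonzero integer, choosing $n > |e|$ invertible in $R/\mathfrak{m}$ gives a contradiction; hence $v(f(s)) \ge 0$, and therefore $f(S) \subseteq \mathcal{O}_v$. I expect the main obstacle to be finding the correct auxiliary element: the argument turns entirely on converting a hypothetical element of negative valuation into a unit inside $1 + \mathfrak{m}S$ whose image still has nonzero valuation, which is exactly what $u_k = 1 + t s^k$ achieves, and this is where both $\mathfrak{m} \ne 0$ and the Henselian pair hypothesis are indispensable. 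A secondary subtlety is the residue characteristic: $n$-th roots may only be extracted for $n$ invertible in $R/\mathfrak{m}$, so one must verify that infinitely many such $n$ still survive to defeat the discreteness of $v$.
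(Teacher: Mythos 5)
Your proof is correct and uses essentially the same mechanism as the paper: extracting $n$-th roots of elements of $1+\mathfrak{m}S$ via the Henselian pair property for infinitely many $n$ invertible in $R/\mathfrak{m}$, playing this against the discreteness of $v$, and using a nonzero $t\in\mathfrak{m}$ to bring an arbitrary $s$ into play. The paper organizes this as two direct steps (first $f(\mathfrak{m}S)\subseteq\mathcal{O}_v$, then $tf(s)^m\in\mathcal{O}_v$ for all $m$) rather than your single contradiction with $u_k=1+ts^k$, but this is only a cosmetic difference; your version is in fact slightly more careful in verifying the derivative condition needed to lift the root.
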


\begin{proof}
Let $k = R / \mathfrak{m}$, then $k$ is a field, so there exist infinitely many positive integers $n$ such that $n$ is invertible in $k$. For such $n$ and for any $s \in \mathfrak{m} S$, consider the equation
\[
T^n - (1 + s),
\]
since $(S, \mathfrak{m} S)$ is a Henselian pair, it must have a solution in $S$. This shows that for any $s \in \mathfrak{m} S$, there exist infinitely many integers $n > 0$ such that $f(1 + s) = 1 + f(s)$ has an $n$-th root in $\mathcal{K}$. By the property of discrete valuations, $\big(1 + f(s)\big) \in \mathcal{O}_v$, hence $f(s) \in \mathcal{O}_v$.

Since $\mathfrak{m} \neq 0$, fix a nonzero element $t \in \mathfrak{m}$ and regard it as an element of $\mathcal{K}$. Then by the above conclusion, for any $s \in S$ and any positive integer $m$, $t f(s)^m = f(t s^m) \in \mathcal{O}_v$. By the property of discrete valuations, it follows that $f(s) \in \mathcal{O}_v$.
\end{proof}

\begin{rmk}
    Take any $0 \neq \varpi \in \mathfrak{m}_K$. In this paper, we mainly apply the above lemma to a $\varpi$-adically complete algebra (according to \cite[\href{https://stacks.math.columbia.edu/tag/0ALJ}{Tag 0ALJ}]{stacks-project} and \cite[\href{https://stacks.math.columbia.edu/tag/09XJ}{Tag 09XJ}]{stacks-project}, any $\varpi$-adically complete algebra $R$ satisfies that $(R, \mathfrak{m}_K R)$ is a Henselian pair). However, the above theorem can also be applied to more general situations, such as a filtered colimit of a family of $\varpi$-adically complete algebras (\cite[\href{https://stacks.math.columbia.edu/tag/0FWT}{Tag 0FWT}]{stacks-project}). In particular, the above theorem can be applied to dagger algebras (cf. \cite[Section 1]{grocol}).

\end{rmk}

\begin{theo}\label{theo:reconst2}
    Let $A$ be an affinoid $K$-algebra whose underlying ring is an integral domain. Let $\calE=\mathrm{Frac}(A)$ and $B \subseteq \mathcal{E}$ be the normalization of $A$. Then
    \[
    \bigcap_{v \in \mathbb{X}} O_v = \left\{
    \begin{matrix}
    B^\circ & \text{if $K$ is discretely valued} \\
    B & \text{if $K$ is not discretely valued}
    \end{matrix}
    \right.
    \]
    where $O_v$ is the valuation ring of $v$.
\end{theo}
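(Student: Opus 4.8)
The plan is to prove, in each case, the two inclusions between $\bigcap_{v}O_v$ and $B$ (resp.\ $B^\circ$) separately: the containment $\bigcap_v O_v \subseteq B$ will come from Krull's theorem, and the reverse containment from Lemma \ref{lemma: image of complete ring to dis value field}. I would begin by recording the structure of $B$. Since $A$ is a Nagata ring (Theorem \ref{theo:nagata affd}), the quotient $A/(0)=A$ is N-2, hence N-1, so the normalization $B$ is finite over $A$; consequently $B$ is itself an affinoid $K$-algebra, and it is a normal noetherian domain with $\mathrm{Frac}(B)=\calE$. As $B$ is reduced it is uniform (Theorem \ref{theo: reduce affd are uniform}). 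Moreover $B^\circ$, being the unit ball of the power-multiplicative supremum seminorm, is integrally closed in $B$, and $B$ is integrally closed in $\calE$; since $\mathrm{Frac}(B^\circ)=\calE$, the ring $B^\circ$ is a normal domain as well, which is in addition noetherian when $K$ is discretely valued (Theorem \ref{theo: noetherian of integer when base is discrete}).

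For the inclusion $\bigcap_v O_v \subseteq B$ I would use the classical fact that a normal noetherian domain is the intersection of its localizations at its height-one primes, each such localization being a DVR whose fraction field is the whole $\calE$. Thus every $B_{\mathfrak p}$ with $\mathrm{ht}\,\mathfrak p = 1$ equals $O_v$ for some $v\in\mathbb{X}(\calE)$, and intersecting gives $\bigcap_{v}O_v \subseteq \bigcap_{\mathfrak p}B_{\mathfrak p}=B$, valid for every $K$. In the discretely valued case I would apply the same statement to the normal noetherian domain $B^\circ$ instead, obtaining the sharper $\bigcap_v O_v\subseteq B^\circ$.

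For the reverse inclusion, fix $v\in\mathbb{X}(\calE)$ and apply Lemma \ref{lemma: image of complete ring to dis value field} with $R=\calO_K$ (a Henselian local domain with $\mathfrak m_K\neq 0$), $\calK=\calE$, and $S$ a $\varpi$-adically complete ring of definition $B_0\subseteq B$, for which $(B_0,\mathfrak m_K B_0)$ is a Henselian pair by the remark after the Lemma and $B_0[1/\varpi]=B$. The inclusion $B_0\hookrightarrow\calE$ is an $\calO_K$-algebra map, so the Lemma yields $B_0\subseteq O_v$. When $K$ is discretely valued, $B^\circ$ is itself such a ($\varpi$-adically complete) ring of definition, so the Lemma gives $B^\circ\subseteq O_v$ for all $v$; together with the Krull inclusion above this proves $\bigcap_v O_v=B^\circ$. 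When $K$ is not discretely valued I must upgrade $B_0\subseteq O_v$ to $B\subseteq O_v$, and the key point is that $v$ restricts trivially to $K$. Indeed, applying the Lemma with $S=\calO_K$ gives $\calO_K\subseteq O_v$, so the valuation ring of $v|_K$ contains $\calO_K$; as $\calO_K$ is a rank-one valuation ring, the only valuation rings between $\calO_K$ and $K$ are $\calO_K$ and $K$, and the former is excluded because $v|_K$ would then be equivalent to the non-discrete valuation of $K$ while taking values in $\dZ$. Hence $v(\lambda)=0$ for all $\lambda\in K^\times$; writing $b=\varpi^{-N}b_0$ with $b_0\in B_0$ then gives $v(b)=v(b_0)\geq 0$, so $B\subseteq O_v$ and, with the Krull inclusion, $\bigcap_v O_v=B$.

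The main obstacle I expect is the non-discretely valued case, precisely the valuation-theoretic step that no discrete valuation of $\calE$ can detect scaling by $K$, i.e.\ that $v|_K$ is trivial; the discretely valued case, by contrast, reduces transparently to Krull's theorem for the noetherian normal ring $B^\circ$ plus one application of the Lemma. A secondary technical point, which I would need to check carefully, is that a ring of definition of $B$ can be chosen $\varpi$-adically complete so that the Henselian-pair hypothesis of the Lemma is satisfied.
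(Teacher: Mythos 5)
Your proposal is correct and follows essentially the same route as the paper: reduce to the finite normalization $B$ via the Nagata property, get $\bigcap_v O_v\subseteq B$ (resp.\ $\subseteq B^\circ$ in the discretely valued case) from the algebraic Hartogs/Krull intersection theorem for the normal noetherian domains $B$ and $B^\circ$, get the reverse inclusion from Lemma \ref{lemma: image of complete ring to dis value field} applied to a $\varpi$-adically complete open bounded subring, and in the non-discrete case upgrade to $B\subseteq O_v$ by showing $v|_K$ is trivial because $O_v\cap K$ contains $\calO_K$ and $\calO_K$ is not a DVR. Your two ``obstacles'' are both non-issues: since $B$ is reduced, $B^\circ$ itself is a closed, open, bounded (Theorem \ref{theo: reduce affd are uniform}) hence $\varpi$-adically complete ring of definition, which is exactly what the paper uses.
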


\begin{proof}
    Since $A$ is a Nagata ring (Theorem \ref{theo:nagata affd}), $B$ is affinoid and we can assume $A=B$. By Lemma \ref{lemma: image of complete ring to dis value field}, we have
    \begin{equation}\label{eq: intersection of DVR > A circ}
        A^\circ\subseteq \bigcap_{v\in\mathbb{X}(\calE)}O_v.
    \end{equation} 
    On the other hand, since \( A \) is a normal Noetherian integral domain, by the algebraic Hartogs lemma (cf. \cite[\href{https://stacks.math.columbia.edu/tag/031T}{Tag 031T}]{stacks-project}), it follows that
\[
A = \bigcap_{v \in \mathbb{X}(\mathcal{E}),\, v(A) \geq 0} O_v .
\]
Therefore, \( A \supseteq \bigcap_{v \in \mathbb{X}(\mathcal{E})} O_v \).

If \( K \) is discretely valued, then Theorem \ref{theo: noetherian of integer when base is discrete} shows that \( A^\circ \) is a Noetherian ring. Since \( A^\circ \) is integrally closed in \( A \) and $A$ is normal, \( A^\circ \) is a normal Noetherian integral domain. Hence, again by \cite[\href{https://stacks.math.columbia.edu/tag/031T}{Tag 031T}]{stacks-project},
\[
A^\circ \supseteq \bigcap_{v \in \mathbb{X}(\mathcal{E})} O_v .
\]

If \( K \) is not discretely valued, we only need to show that for every \( v \in \mathbb{X}(\mathcal{E}) \), \( K \subseteq O_v \). In fact, (\ref{eq: intersection of DVR > A circ}) has shown that $O_v \cap K$ is a ring containing $\mathcal{O}_K$, and therefore it can only be either $\mathcal{O}_K$ or $K$. Since $\mathcal{O}_K$ is not a DVR, and $O_v \cap K$ is either a DVR or $K$ itself, it follows that $K = K \cap O_v$, i.e., $O_v \subseteq K$.

Combining the above consequences, the theorem is proved.
\end{proof}

Now we prove Theorem \ref{theo:ration implies algebraic}. We prove a slightly stronger form.

\begin{theo}\label{theo: proved ration implies algebra}
    Let $A$ be an $\mathcal{O}_K$-algebra such that $(A, \mathfrak{m}_K A)$ is a Henselian pair. Let $B$ be an affinoid $K$-algebra which is itself a normal integral domain. Then the image of any $\mathcal{O}_K$-algebra homomorphism $f: A \to \mathrm{Frac}(B)$ is contained in $B$.
\end{theo}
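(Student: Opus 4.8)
The plan is to combine the two tools already assembled: Lemma \ref{lemma: image of complete ring to dis value field}, which forces the image of a Henselian pair into any single discrete valuation ring, and the reconstruction Theorem \ref{theo:reconst2}, which identifies the intersection of all such valuation rings with $B$ (or with $B^\circ$). Concretely, writing $\mathcal{E} = \mathrm{Frac}(B)$, I would first show that $f(A)$ lands inside every $O_v$ for $v \in \mathbb{X}(\mathcal{E})$, and then intersect over all $v$.

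For the first step, fix a discrete valuation $v$ of $\mathcal{E}$ with valuation ring $O_v$ and apply Lemma \ref{lemma: image of complete ring to dis value field} with $R = \mathcal{O}_K$, $\mathfrak{m} = \mathfrak{m}_K$, $S = A$, $\mathcal{K} = \mathcal{E}$, and the given homomorphism $f$. The hypotheses are readily checked: $\mathcal{O}_K$ is a local integral domain whose maximal ideal $\mathfrak{m}_K$ is nonzero (as $K$ is a non-trivially valued non-Archimedean field) and, being the valuation ring of a complete valued field, it is Henselian; $(A, \mathfrak{m}_K A)$ is a Henselian pair by assumption; and $\mathcal{E}$ contains $\mathcal{O}_K$ since $B$, hence $\mathcal{E}$, is a $K$-algebra. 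The lemma then yields $f(A) \subseteq O_v$. As $v$ was arbitrary, we obtain
\[
f(A) \subseteq \bigcap_{v \in \mathbb{X}(\mathcal{E})} O_v .
\]

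It remains to identify this intersection. Here I would invoke Theorem \ref{theo:reconst2} applied to $B$ itself: since $B$ is a normal affinoid integral domain, the normalization appearing in that statement is $B$, so the intersection equals $B$ when $K$ is not discretely valued and equals $B^\circ$ when $K$ is discretely valued. In either case the intersection is contained in $B$, whence $f(A) \subseteq B$, as claimed.

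Because the substantive content has already been absorbed into the preceding lemma and reconstruction theorem, I do not anticipate a genuine obstacle at this stage; the proof is essentially a matter of assembling them in the correct order. The only point requiring care is the verification that $\mathcal{O}_K$ satisfies the hypotheses of Lemma \ref{lemma: image of complete ring to dis value field}, in particular its Henselian property and the non-vanishing of $\mathfrak{m}_K$ --- the latter being precisely where the non-Archimedean (non-trivially valued) assumption on $K$ enters.
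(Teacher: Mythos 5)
Your proposal is correct and follows exactly the paper's own argument: apply Lemma \ref{lemma: image of complete ring to dis value field} (with $R=\mathcal{O}_K$, $S=A$, $\mathcal{K}=\mathrm{Frac}(B)$) to land $f(A)$ in every $\mathcal{O}_v$, then identify $\bigcap_v \mathcal{O}_v$ via Theorem \ref{theo:reconst2} as a subring of $B$. Your explicit verification of the hypotheses on $\mathcal{O}_K$ is a welcome addition that the paper leaves implicit.
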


\begin{proof}
    By Theorem \ref{lemma: image of complete ring to dis value field}, for each discrete valuation $v$ of $\mathrm{Frac}(B)$, we have $f(A) \subseteq \mathcal{O}_v$. Therefore, by Theorem \ref{theo:reconst2}, the image of $f$ is contained in $B$.    
\end{proof}

We now prove Theorem \ref{theo: finite critier}, for which we need the following well-known result:

\begin{lemma}\label{lemma: finite reduce to domain}
    Let $f: A \to B$ be a homomorphism of Noetherian rings. Suppose that for every minimal prime ideal $\mathfrak{p}$ of $B$, the induced map $A \to B/\mathfrak{p}$ is finite. Then, $f$ is finite.
\end{lemma}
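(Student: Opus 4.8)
The plan is to reduce the finiteness of $f$ to the case where $B$ is reduced, and then to bootstrap back to the general case by a dévissage along the powers of the nilradical. The two facts I will exploit throughout are that $A$ is Noetherian (so any $A$-submodule of a finite $A$-module is again finite) and that finiteness of modules is stable under extensions in a short exact sequence.

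First I would treat the reduced quotient. Since $B$ is Noetherian it has only finitely many minimal primes $\mathfrak{p}_1, \dots, \mathfrak{p}_r$, and their intersection is the nilradical $\mathfrak{n} = \bigcap_{i=1}^r \mathfrak{p}_i$, which is nilpotent: $\mathfrak{n}^N = 0$ for some $N$. The canonical map $B \to \prod_{i=1}^r B/\mathfrak{p}_i$ has kernel exactly $\mathfrak{n}$, so it induces an injection of $A$-modules
\[
B/\mathfrak{n} \hookrightarrow \prod_{i=1}^r B/\mathfrak{p}_i.
\]
By hypothesis each $B/\mathfrak{p}_i$ is a finite $A$-module, so the finite product on the right is finite over $A$; since $A$ is Noetherian, the $A$-submodule $B/\mathfrak{n}$ is finite over $A$ as well. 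Thus the reduced ring $B/\mathfrak{n}$ is finite over $A$.

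Next I would climb back up to $B$ using the filtration
\[
B \supseteq \mathfrak{n} \supseteq \mathfrak{n}^2 \supseteq \dots \supseteq \mathfrak{n}^N = 0.
\]
Each $\mathfrak{n}^k$ is a finitely generated ideal of the Noetherian ring $B$, so each successive quotient $\mathfrak{n}^k/\mathfrak{n}^{k+1}$ is a finitely generated module over $B/\mathfrak{n}$; combined with the previous step (that $B/\mathfrak{n}$ is finite over $A$), this makes each $\mathfrak{n}^k/\mathfrak{n}^{k+1}$ a finite $A$-module. Now $B$ carries a finite filtration whose graded pieces are all finite $A$-modules, and finiteness propagates through the short exact sequences $0 \to \mathfrak{n}^{k+1} \to \mathfrak{n}^k \to \mathfrak{n}^k/\mathfrak{n}^{k+1} \to 0$ by descending induction on $k$, so $B$ is a finite $A$-module; that is, $f$ is finite.

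The only genuinely non-formal point is the passage from the reduced ring back to $B$ itself, and it is precisely the nilradical dévissage that handles it; everything else is an application of the Noetherian hypothesis on $A$ together with the standard stability of finiteness under extensions. I do not anticipate a serious obstacle, but the step requiring the most care is verifying that it is the Noetherian hypothesis on $A$ that licenses descending finiteness to the submodule $B/\mathfrak{n}$, since without it the embedding into the finite product alone would not suffice.
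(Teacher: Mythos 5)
Your proposal is correct and follows essentially the same route as the paper's own proof: embed $B/\mathfrak{n}$ into the finite product $\prod_i B/\mathfrak{p}_i$ and use the Noetherian hypothesis on $A$ to get finiteness of the submodule, then run the nilradical filtration $\mathfrak{n}^k/\mathfrak{n}^{k+1}$ with stability of finiteness under extensions to recover $B$. No gaps.
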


\begin{proof}
    Since $B$ is Noetherian, the set of its minimal prime ideals $\{\mathfrak{p}_1, \dots, \mathfrak{p}_n\}$ is finite. Let $\mathfrak{N} = \text{nil}(B)$ be the nilradical of $B$. It is well-known that $\mathfrak{N} = \bigcap_{i=1}^n \mathfrak{p}_i$.
    Consider the naturally induced injective homomorphism:$$\phi: B/\mathfrak{N} \hookrightarrow \bigoplus_{i=1}^n B/\mathfrak{p}_i$$By assumption, each $B/\mathfrak{p}_i$ is a finite $A$-module, so their finite direct sum $\bigoplus B/\mathfrak{p}_i$ is also a finite $A$-module. Since $A$ is Noetherian, any submodule of a finitely generated $A$-module is also finitely generated. Thus, $B/\mathfrak{N}$ is a finite $A$-module.

    Because $B$ is Noetherian, the ideal $\mathfrak{N}$ is finitely generated and nilpotent, i.e., $\mathfrak{N}^k = 0$ for some integer $k \ge 1$. We consider the following filtration:$$B = \mathfrak{N}^0 \supseteq \mathfrak{N}^1 \supseteq \dots \supseteq \mathfrak{N}^k = 0$$For each $0 \le j < k$, consider the quotient layers $M_j = \mathfrak{N}^j / \mathfrak{N}^{j+1}$. Since $\mathfrak{N} \cdot M_j = 0$, $M_j$ admits a structure of a $B/\mathfrak{N}$-module. Since $B$ is Noetherian, $\mathfrak{N}^j$ is a finitely generated $B$-module, which implies that $M_j$ is a finitely generated $B/\mathfrak{N}$-module.

    Given that $B/\mathfrak{N}$ is a finite $A$-algebra and $M_j$ is a finite $B/\mathfrak{N}$-module, it follows by the transitivity of finiteness that each $M_j$ is a finite $A$-module.From the short exact sequences:$$0 \to \mathfrak{N}^{j+1} \to \mathfrak{N}^j \to M_j \to 0$$and the fact that the property of being a finite $A$-module is stable under extensions, we conclude by induction on $j$ (from $k-1$ down to $0$) that $\mathfrak{N}^0 = B$ is a finite $A$-module.Thus, $f: A \to B$ is a finite homomorphism.
\end{proof}

\begin{theo}\label{cor: max affinoid of finitely generated field proof}
    Let $f: A \to B$ be a homomorphism of affinoid $K$-algebras. Then the following are equivalent:
    \begin{enumerate}
        \item $f$ is finite;
        \item For every minimal prime ideal $\mathfrak{p} \subseteq B$ of $B$, the field extension $\kappa(f^{-1}\big(\mathfrak{p})\big) \subseteq \kappa(\mathfrak{p})$ is a finitely generated field extension.
    \end{enumerate}
\end{theo}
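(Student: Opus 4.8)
The plan is to prove the two implications separately; $(1)\Rightarrow(2)$ is formal, while $(2)\Rightarrow(1)$ carries all the content. For $(1)\Rightarrow(2)$ I would fix a minimal prime $\mathfrak{p}\subseteq B$, set $\mathfrak{q}=f^{-1}(\mathfrak{p})$, and note that $B/\mathfrak{p}$ is a finite $A$-module, hence a finite $A/\mathfrak{q}$-module; localizing at the nonzero elements of $A/\mathfrak{q}$ then exhibits $\kappa(\mathfrak{p})=\mathrm{Frac}(B/\mathfrak{p})$ as a \emph{finite} (a fortiori finitely generated) extension of $\kappa(\mathfrak{q})$, needing none of Section~3. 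For $(2)\Rightarrow(1)$ the first step is to apply Lemma~\ref{lemma: finite reduce to domain}, reducing to the finiteness of $A\to B/\mathfrak{p}$ for each minimal prime $\mathfrak{p}$; replacing $B$ by the affinoid domain $B/\mathfrak{p}$ and $A$ by its image $A/f^{-1}(\mathfrak{p})$ (again affinoid as a quotient), the task becomes: given an injection $A\hookrightarrow B$ of affinoid integral domains with $\mathcal{F}=\mathrm{Frac}(B)$ finitely generated over $\mathcal{E}=\mathrm{Frac}(A)$, show $B$ is a finite $A$-module.

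The hard part will be upgrading ``finitely generated'' to ``finite'', that is, showing $\mathcal{F}/\mathcal{E}$ is algebraic; this is where the analytic nature of affinoid algebras is essential, and I expect it to be the main obstacle. The mechanism I would aim for is the dichotomy that, for an injection of affinoid domains, the transcendence degree of $\mathcal{F}$ over $\mathcal{E}$ is either $0$ or $\infty$ and never finite and positive; since a finitely generated extension has finite transcendence degree, this forces it to be $0$. To establish the dichotomy I would argue contrapositively, manufacturing from a single element transcendental over $\mathcal{E}$ infinitely many algebraically independent ones: using Noether normalization for affinoid algebras (\cite{Bosch_2014}) one may reduce $A$ to a Tate algebra $T_n$ and pass to the generic fibre, the engine being the classical fact that $\mathrm{Frac}(T_m)$ has infinite transcendence degree over $K$ for $m\ge 1$ --- witnessed by lacunary series whose valuations are unbounded and which therefore cannot be algebraic over a smaller affinoid fraction field (the inclusion $K\langle z\rangle\hookrightarrow K\langle z,z^{-1}\rangle$ of the closed disc into the annulus is the model case, already non-finite with transcendental fraction-field extension). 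Making this relative version precise is the delicate point I would spend the most effort on.

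Granting that $\mathcal{F}/\mathcal{E}$ is finite, the finish combines the paper's two main results. Let $A'$ be the integral closure of $A$ in $\mathcal{F}$; since $A$ is Nagata (Theorem~\ref{theo:nagata affd}) and $\mathcal{F}/\mathcal{E}$ is finite, $A'$ is a finite $A$-module, hence a normal affinoid domain with $\mathrm{Frac}(A')=\mathcal{F}$. Let $\widetilde{B}$ be the normalization of $B$, finite over $B$ by the same theorem, also a normal affinoid domain with $\mathrm{Frac}(\widetilde{B})=\mathcal{F}$. Both $A'$ and $\widetilde{B}$ are normal affinoid domains sitting inside the single field $\mathcal{F}$, so Theorem~\ref{theo:reconst2} applies to each: if $K$ is not discretely valued both equal $\bigcap_{v\in\mathbb{X}(\mathcal{F})}\mathcal{O}_v$, whence $A'=\widetilde{B}$; if $K$ is discretely valued both power-bounded subrings equal that intersection, so $(A')^{\circ}=\widetilde{B}^{\circ}$, and since a reduced affinoid algebra over a discretely valued field satisfies $A=A^{\circ}[\tfrac{1}{\pi}]$ for a uniformizer $\pi$ (a consequence of uniformity, Theorem~\ref{theo: reduce affd are uniform}), again $A'=\widetilde{B}$. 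Thus $\widetilde{B}=A'$ is finite over $A$; as $B\subseteq\widetilde{B}$ and $A$ is Noetherian, $B$ is a submodule of a finite $A$-module and so is itself finite over $A$, proving $f$ finite.
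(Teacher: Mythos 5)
Your reduction via Lemma~\ref{lemma: finite reduce to domain}, your direction $(1)\Rightarrow(2)$, and your closing argument (applying Theorem~\ref{theo:reconst2} to both the integral closure $A'$ of $A$ in $\mathcal{F}$ and the normalization $\widetilde{B}$ of $B$, and in the discretely valued case recovering $A'=\widetilde{B}$ from $(A')^{\circ}=\widetilde{B}^{\circ}$ via $A'=(A')^{\circ}[\tfrac{1}{\pi}]$) are all correct; the finish is in fact a clean variant of the paper's. But the step you yourself flag as delicate --- showing $\mathcal{F}/\mathcal{E}$ is algebraic --- is a genuine gap, and the mechanism you propose for it does not work. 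Your ``engine,'' that $\mathrm{Frac}(T_m)$ has infinite transcendence degree over $K$, cannot see the relative situation: transcendence degree is additive in towers, so $\mathrm{trdeg}(\mathcal{F}/K)=\mathrm{trdeg}(\mathcal{F}/\mathcal{E})+\mathrm{trdeg}(\mathcal{E}/K)$, and since $\mathrm{trdeg}(\mathcal{E}/K)$ is already infinite (unless $\dim A=0$), the equation reads $\infty=x+\infty$ and gives no information about $x=\mathrm{trdeg}(\mathcal{F}/\mathcal{E})$. Concretely, from a transcendental $t\in B$ one does get an embedding $K\langle X\rangle\hookrightarrow B$, $X\mapsto ct$ for $c$ small, and hence a copy of $\mathrm{Frac}(T_1)$ inside $\mathcal{F}$ of infinite transcendence degree over $K$; but what is needed is algebraic independence over the much larger field $\mathcal{E}$, and nothing in the sketch produces that. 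Note also that your dichotomy (``$\mathrm{trdeg}$ is $0$ or $\infty$'') is strictly stronger than the theorem: even granting the theorem one cannot easily deduce it, because the intermediate extension $\mathcal{F}/\mathrm{Frac}(A\langle t_1,\dots,t_r\rangle)$ is algebraic but not visibly finitely generated, so the theorem's hypothesis is unavailable. So the one step carrying all the content is left unproven, and the proposed route to it collapses.

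The paper closes exactly this hole with no analytic input at all, by valuation theory, and this is where hypothesis $(2)$ actually enters. Let $\mathcal{K}$ be the algebraic closure of $\mathcal{E}$ in $\mathcal{F}$; it is \emph{finite} over $\mathcal{E}$ by \cite[\href{https://stacks.math.columbia.edu/tag/037J}{Tag 037J}]{stacks-project} precisely because $\mathcal{F}/\mathcal{E}$ is finitely generated. Let $\overline{A}$ be the integral closure of $A$ in $\mathcal{K}$, finite over $A$ since $A$ is Nagata (Theorem~\ref{theo:nagata affd}); then $\overline{A}\subseteq B$ by Lemma~\ref{lemma: image of complete ring to dis value field}. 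For the reverse inclusion one uses Theorem~\ref{theo:reconst2} together with Lemma~\ref{lemma:int closure of DVR} (Krull--Akizuki plus the existence of discrete valuations dominating a given Noetherian local domain): for each $v\in\mathbb{X}(\mathcal{K})$, the intersection of all $\mathcal{O}_w$, $w\in\mathbb{X}(\mathcal{F})$ dominating $\mathcal{O}_v$, is the integral closure of $\mathcal{O}_v$ in $\mathcal{F}$, which equals $\mathcal{O}_v$ itself because $\mathcal{K}$ is algebraically closed in $\mathcal{F}$ and $\mathcal{O}_v$ is normal. Hence $B=\bigcap_{w\in\mathbb{X}(\mathcal{F})}\mathcal{O}_w\subseteq\bigcap_{v\in\mathbb{X}(\mathcal{K})}\mathcal{O}_v=\overline{A}$, so $B=\overline{A}$ is finite over $A$. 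In this arrangement the algebraicity of $\mathcal{F}/\mathcal{E}$ is a \emph{consequence} ($\mathcal{F}=\mathrm{Frac}(\overline{A})=\mathcal{K}$) rather than an input; transcendental elements are excluded because Lemma~\ref{lemma:int closure of DVR} manufactures, for any $x$ not integral over $\mathcal{O}_v$, a discrete valuation ring of $\mathcal{F}$ dominating $\mathcal{O}_v$ and omitting $x$. If you replace your dichotomy step by this argument, your remaining structure goes through (and your closing paragraph becomes an alternative, slightly streamlined, version of the paper's conclusion).
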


\begin{proof}
    By Lemma \ref{lemma: finite reduce to domain}, we may assume that both $A$ and $B$ are integral domains and that $f$ is injective. It suffices to prove that $B$ is integral over $A$ (cf. \cite[Section 6.3.5, Theorem 1]{Bosch_1984}). Replacing $B$ by its normalization, we may assume that $B$ is normal.

First, assume that $K$ is non-discrete. Let $\mathcal{K}$ be the algebraic closure of $\mathrm{Frac}(A)$ in $\mathrm{Frac}(B)$; this is a finite extension of $\mathrm{Frac}(A)$ by \cite[\href{https://stacks.math.columbia.edu/tag/037J}{Tag 037J}]{stacks-project}. Let $\overline{A}$ be the integral closure of $A$ in $\mathcal{K}$, which is finite over $A$ because $A$ is Nagata. By Lemma \ref{lemma: image of complete ring to dis value field}, we have $\overline{A}^\circ \subset B$ and hence $\overline{A} \subset B$.

We claim that $\overline{A} = B$. By Theorem \ref{theo:reconst2},
\[
\overline{A} = \bigcap_{v \in \mathbb{X}(\mathcal{K})} \mathcal{O}_v
\]
and
\[
B = \bigcap_{w \in \mathbb{X}(\mathrm{Frac}(B))} \mathcal{O}_w.
\]
It remains to show that for any discrete valuation $v$ on $\mathcal{K}$,
\[
\bigcap_{w} \mathcal{O}_w
\]
where the intersection runs over all discrete valuations on $\mathrm{Frac}(B)$ extending $v$. This is established in Lemma \ref{lemma:int closure of DVR}.

The same argument also applies when $K$ is discrete.
\end{proof}

\begin{lemma}\label{lemma:int closure of DVR}
    Let $O$ be a DVR with field of fractions $K$. Let $L/K$ be a finitely generated field extension. Let $X$ be the subset of $\mathbb{X}(L)$ consists of those valuations that dominates $O$. Then:$$\bigcap_{w \in X} \calO_w = B$$where $B$ is the integral closure of $O$ in $L$.
\end{lemma}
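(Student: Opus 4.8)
The plan is to prove the two inclusions $B \subseteq \bigcap_{w\in X}\calO_w$ and $\bigcap_{w\in X}\calO_w \subseteq B$ separately; the first is formal, and essentially all the work is in the second. For the easy inclusion, note that every $w \in X$ dominates $O$, so in particular $O \subseteq \calO_w$, and a valuation ring is integrally closed in its own fraction field $L$. Since $B$ is by definition the integral closure of $O$ in $L$, it follows that $B \subseteq \calO_w$ for each $w \in X$, hence $B \subseteq \bigcap_{w\in X}\calO_w$.

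For the reverse inclusion it suffices to show that any $x \in L$ which is not integral over $O$ lies outside some $\calO_w$ with $w \in X$; equivalently, to produce a discrete valuation $w$ of $L$ dominating $O$ with $w(x) < 0$ (note $x \neq 0$ since $0 \in B$). Fix a uniformizer $\pi$ of $O$ and set $R = O[x^{-1}] \subseteq L$, a finitely generated, hence Noetherian, $O$-subalgebra. The central step, which I expect to be the main obstacle, is the claim that the ideal $(\pi, x^{-1})R$ is \emph{proper}. Indeed, a relation $1 = \pi a + x^{-1} b$ with $a,b \in R$, after expanding $a = \sum_i \alpha_i x^{-i}$ and $b = \sum_i \beta_i x^{-i}$ with $\alpha_i,\beta_i \in O$ and collecting powers of $x^{-1}$, takes the form $u = \sum_{i=1}^{N} \gamma_i x^{-i}$ with $\gamma_i \in O$ and $u = 1 - \pi\alpha_0 \in O^{\times}$ a unit; multiplying by $u^{-1} x^{N}$ then exhibits a monic polynomial relation for $x$ over $O$, contradicting non-integrality. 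The point of insisting that $\pi$ (and not merely $x^{-1}$) lie in the ideal is precisely that this is what will force the eventual valuation to dominate $O$ rather than merely to be negative on $x$.

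Granting properness, I would pick a minimal prime $\mathfrak{p}$ over $(\pi, x^{-1})$ in $R$. Then $\pi, x^{-1} \in \mathfrak{p}$, the localization $R_{\mathfrak{p}}$ is a Noetherian local domain with fraction field $K(x) \subseteq L$, and $\dim R_{\mathfrak{p}} \geq 1$ because $\mathfrak{p}$ contains the nonzero element $\pi$. Invoking the standard fact that a Noetherian local domain of dimension at least one is dominated by a discrete valuation ring of its fraction field, I obtain a discrete valuation $w_0$ of $K(x)$ dominating $R_{\mathfrak{p}}$; since $\pi, x^{-1}$ lie in $\mathfrak{m}_{R_{\mathfrak p}}$ this gives $w_0(\pi) > 0$ and $w_0(x) = -w_0(x^{-1}) < 0$, so $w_0$ dominates $O$ and is negative on $x$. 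Finally I would transport $w_0$ up to $L$: as $L/K$ is finitely generated so is $L/K(x)$, and one extends $w_0$ to a discrete valuation $w$ of $L$ by the usual two steps, namely Gauss valuations along a transcendence basis of $L/K(x)$ (which preserves the value group $\dZ$) followed by Krull--Akizuki to pass through the remaining finite extension, localizing the Dedekind integral closure at a prime over $\mathfrak{m}_{w_0}$. The resulting $w \in \mathbb{X}(L)$ still satisfies $w(\pi) > 0$ and $w(x) < 0$, so $w \in X$ and $x \notin \calO_w$, which completes the reverse inclusion.

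Thus the essential difficulty is arranging one valuation that is simultaneously discrete, dominates $O$, and is negative on $x$: the properness of $(\pi, x^{-1})R$ delivers domination and negativity together, while discreteness is what forces the argument to be routed through Noetherian local domains and Krull--Akizuki rather than through the general (possibly non-discrete, possibly non-dominating) valuation rings that the naive integral-closure-equals-intersection-of-valuation-rings statement would supply.
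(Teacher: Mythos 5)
Your proof is correct, and its engine is the same as the paper's: for $x$ outside the integral closure one forms $R = O[x^{-1}]$, shows that the ideal generated by the maximal ideal of $O$ together with $x^{-1}$ is proper, and then invokes the existence of a discrete valuation ring of $L$ dominating a given Noetherian local domain of positive dimension (Stacks 00PH) to produce $w \in X$ with $w(x)<0$; the easy inclusion $B \subseteq \bigcap_{w\in X}\calO_w$ is handled identically. The one genuine divergence is in how properness of $(\pi, x^{-1})R$ is obtained. The paper first reduces, via Krull--Akizuki applied to the integral closure $B$, to the case $O = B$ with $K$ algebraically closed in $L$; then $O[x^{-1}]$ is either $O$ or a polynomial ring over $O$, and $(\mathfrak{m}, x^{-1})$ is visibly a maximal ideal. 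You instead prove properness directly for an arbitrary $x$ not integral over $O$ by the monic-relation computation (a relation $1 = \pi a + x^{-1}b$ forces a monic equation for $x$ over $O$ because $1-\pi\alpha_0$ is a unit), which makes the preliminary Krull--Akizuki reduction unnecessary and is arguably more elementary; the price is that you must pass to a minimal prime over $(\pi, x^{-1})$ rather than a ready-made maximal ideal, which is harmless since either localization is a Noetherian local domain of dimension at least one, which is all that 00PH requires. Your two-step extension of the valuation from $K(x)$ to $L$ (Gauss valuations along a transcendence basis, then Krull--Akizuki through the residual finite extension) is essentially the proof of 00PH unwound, so nothing is lost there, and the final verification that the resulting $w$ dominates $O$ and satisfies $w(x)<0$ is sound.
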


\begin{proof}
    Let $\mathfrak{m}$ be the maximal ideal of $O$, and let $\overline{K}$ be the algebraic closure of $K$ in $L$. By \cite[\href{https://stacks.math.columbia.edu/tag/037J}{Tag 037J}]{stacks-project}, $\overline{K}$ is a finite extension of $K$. Therefore, by Krull–Akizuki's Theorem (\cite[\href{https://stacks.math.columbia.edu/tag/00PG}{Tag 00PG}]{stacks-project}), $B$ is a Dedekind domain. Suppose $\mathfrak{p}$ is a maximal ideal of $B$, then $B_{\mathfrak{p}}$ is a DVR dominating $O$. Thus it suffices to show that the intersection of the valuation rings in $\mathbb{X}(L)$ that dominate the valuation of $B_{\mathfrak{p}}$ is $B_{\mathfrak{p}}$. This reduces to the case $K = \overline{K}$ (i.e., $O = B$).

For any $x \in L \setminus O$, $R := O[x^{-1}] \subseteq L$ is either $O$ itself or isomorphic to the polynomial ring in one variable over $O$. In both cases, $M = (\mathfrak{m}, x^{-1})$ generates a maximal ideal. By \cite[\href{https://stacks.math.columbia.edu/tag/00PH}{Tag 00PH}]{stacks-project}, there exists $w \in \mathbb{X}(L)$ such that $\mathcal{O}_w$ dominates $R_M$. Therefore, $w \in X$ and $x \notin \mathcal{O}_w$. This shows that $O \subseteq \bigcap_{w \in X} \calO_w$. The other inclusion is obvious.
\end{proof}

Finally, we prove Theorem \ref{theo: prismatic}.

\begin{theo}\label{prismatic proof}
    Let $A$, $B$ be affinoid $K$-algebras and $R$ a finitely generated $B$-algebra. Then for a $K$-algebra homomorphism $f: A \to R$, its image $f(A)$ is integral over $B$, that is, each element of $f(A)$ is integral over $B$.
\end{theo}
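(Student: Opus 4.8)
The plan is to reduce the statement to the reconstruction principle underlying Theorem~\ref{theo:reconst2} --- that the power-bounded image of a complete object lands inside \emph{every} discrete valuation ring of $\mathrm{Frac}(R)$ --- and then to show that the intersection of all such valuation rings is already contained in the integral closure of $B$ in $\mathrm{Frac}(R)$.

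First I would reduce to the case where $R$ is an integral domain. Since $R$ is of finite type over the Noetherian ring $B$, it is Noetherian and has finitely many minimal primes $\mathfrak{q}_1,\dots,\mathfrak{q}_n$, and an element $r\in R$ is integral over $B$ as soon as its image in each $R/\mathfrak{q}_i$ is: if $P_i(r)\in\mathfrak{q}_i$ for monic $P_i\in B[T]$, then $\prod_i P_i(r)\in\bigcap_i\mathfrak{q}_i$ is nilpotent, and a suitable power of $\prod_i P_i$ is then a monic relation for $r$ over $B$. As each $R/\mathfrak{q}_i$ is again of finite type over $B$, I may assume $R$ is a domain. Replacing $A$ and $B$ by their images in $R$ (integrality over a subring depends only on that subring, and quotients of affinoid algebras are affinoid), I reduce to the situation $A,B\subseteq R$ with $A,B$ affinoid domains, $f$ the inclusion $A\hookrightarrow R$, and, setting $L=\mathrm{Frac}(R)$, the field $L$ finitely generated over $\mathrm{Frac}(B)$; the goal becomes $A\subseteq\overline{B}$, where $\overline{B}$ is the integral closure of $B$ in $L$.

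Next, exactly as in the proof of Theorem~\ref{theo:reconst2}, I would feed the composite $A^\circ\hookrightarrow A\hookrightarrow R\hookrightarrow L$ into Lemma~\ref{lemma: image of complete ring to dis value field}. Since $A$ is now a reduced affinoid algebra, $A^\circ$ is bounded (Theorem~\ref{theo: reduce affd are uniform}) and $\varpi$-adically complete for any $0\neq\varpi\in\mathfrak{m}_K$, so $(A^\circ,\mathfrak{m}_K A^\circ)$ is a Henselian pair, and the lemma applies to every discrete valuation $w$ of $L$ (the only hypothesis used in its proof being $\varpi\neq 0$). This gives
\[
A^\circ\subseteq\bigcap_{w\in\mathbb{X}(L)}\mathcal{O}_w .
\]
To pass from $A^\circ$ to $A$, observe that in the Banach algebra $A$ every $x$ has finite spectral radius, so $\varpi^N x\in A^\circ$ for $N\gg 0$ and hence $A=A^\circ[\varpi^{-1}]$; since $\varpi^{-1}\in K\subseteq B\subseteq\overline{B}$ and $\overline{B}$ is a ring, it suffices to prove the inclusion
\[
\bigcap_{w\in\mathbb{X}(L)}\mathcal{O}_w\subseteq\overline{B}.
\]

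This last inclusion is the heart of the matter and is a direct generalization of Lemma~\ref{lemma:int closure of DVR} from a DVR base to the affinoid base $B$. I would argue by contraposition: given $c\in L$ not integral over $B$, the criterion that $c$ is integral over $B$ if and only if $c^{-1}$ is a unit in $B[c^{-1}]$ shows that $c^{-1}$ is a non-unit of the Noetherian ring $B[c^{-1}]$, hence lies in a maximal ideal $\mathfrak{M}$. The localization $(B[c^{-1}])_{\mathfrak{M}}$ is then a Noetherian local domain inside $L$ with $c^{-1}$ a nonzero element of its maximal ideal, so by \cite[Tag 00PH]{stacks-project} --- applied exactly as in Lemma~\ref{lemma:int closure of DVR} --- there is a discrete valuation $w\in\mathbb{X}(L)$ dominating it; then $w(c^{-1})>0$, i.e.\ $c\notin\mathcal{O}_w$. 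Thus $c$ is excluded from the intersection, and combining the three steps yields $f(A)=A\subseteq\overline{B}$, which is exactly the claim. The main obstacle I anticipate is this final step: one must ensure \cite[Tag 00PH]{stacks-project} produces a discrete valuation of the \emph{full} field $L$ rather than of the possibly smaller field $\mathrm{Frac}(B)(c)$ --- this is where finite generation of $L/\mathrm{Frac}(B)$ is essential --- and one must verify the integrality criterion with care, since here $B$ need not be normal. A secondary technical point, already implicit in Theorem~\ref{theo:reconst2}, is confirming that $A^\circ$ is $\varpi$-adically complete (and so a Henselian pair) in the non-discretely-valued case.
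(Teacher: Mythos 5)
Your proposal is correct, and while its skeleton matches the paper's (reduce to $R$ a domain, pass to images, show $A^\circ\subseteq\bigcap_{w\in\mathbb{X}(L)}\mathcal{O}_w$ via Lemma \ref{lemma: image of complete ring to dis value field}, then show that intersection lies in the integral closure $\overline{B}$ of $B$ in $L$), you handle the two reductions genuinely differently, and in both cases more self-containedly. For the reduction to domains, the paper cites Step 1 of \cite[Proposition 2.1.1]{guo2021prismaticcohomologyrigidanalytic} to pass to the reduced case and then embeds $R$ into $\prod_j R/\mathfrak{p}_j$; your ``product of monic witnesses lands in the nilradical, so a power of it is a monic relation'' argument handles the non-reduced case in one stroke with no external input. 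More substantially, for the key inclusion $\bigcap_{w}\mathcal{O}_w\subseteq\overline{B}$ the paper routes through Theorem \ref{theo:reconst2} applied to $\overline{B}$ (which needs $\overline{B}$ to be affinoid, hence the Nagata property and finiteness of $\overline{B}$ over $B$, plus the algebraic Hartogs lemma and the case split on whether $K$ is discretely valued) together with Lemma \ref{lemma:int closure of DVR} on extending valuations. You instead prove the inclusion directly by contraposition: $c$ not integral over $B$ forces $c^{-1}$ to be a non-unit of $B[c^{-1}]$, and \cite[\href{https://stacks.math.columbia.edu/tag/00PH}{Tag 00PH}]{stacks-project} applied to the localization at a maximal ideal containing $c^{-1}$ (legitimate precisely because $L/\mathrm{Frac}(B)$ is finitely generated, as you note) produces $w\in\mathbb{X}(L)$ with $w(c)<0$. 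This is in effect Lemma \ref{lemma:int closure of DVR} generalized from a DVR base to a Noetherian domain base, and it buys you independence from normality of $B$, from finiteness of $\overline{B}$ over $B$, and from the discrete/non-discrete dichotomy; the price is that it proves only what this theorem needs and does not recover the exact reconstruction statement of Theorem \ref{theo:reconst2}. The two technical points you flag (that $A^\circ$ is $\varpi$-adically complete so that $(A^\circ,\mathfrak{m}_KA^\circ)$ is Henselian, and that the integrality criterion via $c^{-1}$ needs no normality) both check out, so I see no gap.
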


\begin{proof}
    First, assume that $R$ is an integral domain. At this point, by replacing $A$ and $B$ with their images in $R$, we may assume that $A$ and $B$ are both subrings of $R$. The proof of Theorem \ref{cor: max affinoid of finitely generated field proof} shows that the integral closure $\overline{B}$ of $B$ in $\mathrm{Frac}(R)$ is a finite algebra over $B$, so we can replace $B$ with $\overline{B}$, and thus we assume that $B$ is integrally closed in $\mathrm{Frac}(R)$. Therefore, it suffices to prove that $A \subseteq B$, which is equivalent to $A^\circ \subseteq B$. According to the proof of Theorem \ref{cor: max affinoid of finitely generated field proof}, $A^\circ$ is contained in $\bigcap_{w \in \mathbb{X}(\mathrm{Frac}(R))}$, and the latter is a subring of $B$. Hence, the theorem is proved.

    For the general case, by Step 1 of the proof of \cite[Proposition 2.1.1]{guo2021prismaticcohomologyrigidanalytic}[, we can reduce to the case where $R$ is reduced. Suppose $\{\mathfrak{p}_1, \mathfrak{p}_2, \dots, \mathfrak{p}_r\}$ are the minimal prime ideals of $R$, then $R$ is a subring of 
\[
R' := \prod_{j=1}^{r} R / \mathfrak{p}_j.
\]
Applying the conclusion for integral domains to $R'$ suffices.
\end{proof}

\bibliographystyle{alpha}
\bibliography{ref}
\addcontentsline{toc}{section}{References}

\end{document}